\newtheorem{theorem}{Theorem}[section]
\newtheorem{cor}[theorem]{Corollary}
\newtheorem{pro}[theorem]{Proposition}
\newtheorem{defi}[theorem]{Definition}
\newtheorem{nota}[theorem]{Notation}
\newtheorem{fact}[theorem]{Fact}
\newtheorem{nist}[theorem]{}
\newcommand{\tiff}{if and only if \ }
\newcommand{\df}{\ensuremath{\overset{\mathrm{df}}{=}}}
\def\p{\varphi}
\def\a{\alpha}
\def\TE{\Theta}
\def\s{\sigma}
\def\lra{\longrightarrow}
\def\sbe{\subseteq}
\def\stm{\setminus}
\def\ems{\emptyset}
\def\nes{\neq\emptyset}
\def\ex{\exists}
\def\fa{\forall}
\def\we{\wedge}
\def\bv{\bigvee}
\def\ap{^{\,\prime}}
\def\inv{^{-1}}
\def\st{\ |\ }
\def\card #1{\vert #1 \vert}
\def\1{{\bf 1}}
\def\2{\mbox{{\bf 2}}}
\def\3{\mbox{{\bf 3}}}
\def\CC{{\cal C}}
\def\OO{{\cal O}}
\def\TT{{\cal T}}
\def\FFF{{\sf F}}
\def\GGG{{\sf G}}
\def\PPP{{\sf P}}
\def\SSS{{\sf S}}
\def\TTT{{\sf T}}
\def\co{{\sf CO}}
\def\ZHLC{{\bf BooleSp}}
\def\GBPL{{\bf GBoole}}
\def\ZHC{{\bf Stone}}
\def\Bool{{\bf Boole}}
\def\MBool{{\bf mzMaps}}
\def\LMBool{{\bf lmzMaps}}
\def\ZLBA{{\bf ZLBA}}
\def\DZA{{\bf dzBoole}}
\def\LDZA{{\bf ldzBoole}}
\def\int{\mbox{{\rm int}}}
\def\cl{\mbox{{\rm cl}}}
\def\CO{\mbox{{\rm CO}}}
\def\KO{\mbox{{\rm KO}}}
\def\At{{\rm At}}
\def\Att{{\sf At}}
\def\sq{\hspace*{\fill} \hbox{\vrule\vbox{\hrule\phantom{o}\hrule}\vrule}}
\def\sqs{\sq \vspace{2mm}}
\def\Top{{\bf Top}}
\def\ZH{{\bf ZDHaus}}
\def\Caba{{\bf Caba}}
\def\NNNN{\mathbb{N}}
\def\Set{{\bf Set}}
\def\LBA{{\bf LBA}}
\def\tcx{t_X^C}
\def\tcy{t_Y^C}
\def\tcx0{t_{(X,X_0)}}
\def\tcy0{t_{(Y,Y_0)}}
\def\bU0{\bar{U}=(U^0,(U^i,U^{ci})_{i\in\omega})}
\def\bV0{\bar{V}=(V^0,(V^i,V^{ci})_{i\in\omega})}
\title{{\LARGE\bf Extensions of the Stone Duality}\\
\vspace{0.2cm} {\LARGE\bf  to the category $\ZHLC$}\\
\vspace{0.5cm}
{\large\bf G. Dimov and E. Ivanova-Dimova}\thanks{The authors
 were supported by the Bulgarian National Fund of Science, contract no. DN02/15/19.12.2016.}
\\
\vspace{0.2cm} {\footnotesize\rm Faculty of Math. and Informatics,
Sofia University,} {\footnotesize\rm 5 J. Bourchier Blvd., 1164
Sofia, Bulgaria}
}
\author{}
\date{}
\begin{document}

\maketitle

\begin{abstract}
In \cite{DD}, extending the Stone Duality Theorem, we proved two duality theorems for
the category $\ZH$ of
zero-dimensional Hausdorff spaces and continuous maps. Now we derive from them the extension of the Stone Duality Theorem to the category $\ZHLC$
of zero-dimensional locally compact Hausdorff spaces and continuous maps obtained in \cite{D-PMD12}, as well as two new duality theorems for the category $\ZHLC$.
\end{abstract}

\footnotetext[1]{{\footnotesize {\em Keywords:}   (complete atomic) Boolean algebra, Boolean (l)(d)z-algebra,  Boolean (l)(m)z-map, Stone
space, Boolean space, duality.}}

\footnotetext[2]{{\footnotesize {\em 2010 Mathematics Subject
Classification:} 54B30, 54D45, 54D80, 18A40, 18B30, 06E15,
06E75.}}

\footnotetext[3]{{\footnotesize {\em E-mail addresses:}
gdimov@fmi.uni-sofia.bg, elza@fmi.uni-sofia.bg}}

\section{Introduction}

In 1937, M. Stone \cite{ST}
proved that there exists a bijective correspondence $T_l$ between the
class of all (up to homeomorphism) zero-dimensional locally
compact Hausdorff spaces (briefly, {\em Boolean spaces}\/) and the
class of all (up to isomorphism) generalized Boolean
algebras (briefly, GBAs) (or, equivalently, Boolean rings
with or without unit). In the class of compact Boolean spaces
(briefly, {\em Stone spaces}\/) this bijection can be extended to
a duality $\TTT:\ZHC\lra\Bool$ between the category $\ZHC$ of Stone
spaces and continuous maps and the category $\Bool$ of Boolean
algebras and Boolean homomorphisms; this is the classical Stone Duality Theorem.
In 1964, H. P. Doctor \cite{Do} showed that the Stone bijection $T_l$ can be extended even to a duality between the
category $\ZHLC_{\bf perf}$ of all Boolean spaces and all perfect maps between them and the category $\GBPL$ of all GBAs and suitable morphisms between them.
Later on, G. Dimov \cite{D-a0903-2593,D-PMD12} extended the Stone
Duality to the category $\ZHLC$ of Boolean spaces and continuous maps.
Finally, in \cite{DD}, we extended the Stone
Duality to the category $\ZH$ of zero-dimensional Hausdorff spaces and continuous maps.

In this paper, which can be regarded as a continuation of \cite{DD}, we show how the Dimov Duality Theorem mentioned above can be derived from one of our general duality theorems for the category $\ZH$ proved in \cite{DD}. Moreover, with the help of our results from \cite{DD}, we obtain two new extensions of the Stone Duality Theorem to the category $\ZHLC$, one of which is in the spirit of the recent Duality Theorem of Bezhnanishvili, Morandi and Olberding \cite{BMO} for the category ${\bf Tych}$ of Tychonoff spaces and continuous maps.

The paper is organized as follows. In Section 2 we collect all preliminary facts which are needed for the exposition of our results. In particular, we briefly recall and fix the notation pertaining to all, the Stone Duality, the Tarski Duality and our two duality theorems from \cite{DD} with which we extended the Stone Duality Theorem to the category $\ZH$. In Section 3 we introduce the notion of {\em Boolean ldz-algebra} and present our first new duality theorem for the category $\ZHLC$ (see Theorem \ref{genstonecnew1}). It extends the Stone Duality Theorem and is obtained with the help of our duality theorem \cite[Theorem 3.15]{DD}. After that, using it, we give a new proof of the Dimov Duality Theorem for the category $\ZHLC$ (see \ref{nistboolsp} and Theorem \ref{genstonecnew2}). Finally, in Section 4, we introduce the notion of {\em lmz-map} and with its help we obtain our second new duality theorem for the category $\ZHLC$ (see Theorem \ref{genstonecnew111}). Its proof is based on our duality theorem \cite[Theorem 4.8]{DD} for the category $\ZH$.

 We now fix the general notation.

 Throughout, $(B, \land, \lor, {}^*, 0, 1)$ will denote a Boolean algebra unless indicated otherwise;
we do not assume that $0 \neq 1$. With some abuse of language, we
shall usually identify algebras with their universe, if no
confusion can arise.

 We denote by $\2$ the simplest Boolean
algebra containing only $0$ and $1$, where $0\neq 1$, and by $\NNNN^+$ the set of positive integers.

 If $A$ is a Boolean algebra, then  $A^+ \df A \setminus \{0\}$ and $\At(A)$ is the set of all atoms of $A$.

 If $X$ is a set, we denote by $P(X)$ the power set of $X$;
 clearly, $(P(X),\cup,\cap,\stm,\ems, X)$  $(=(P(X),\sbe))$ is a complete atomic Boolean
 algebra.

 If $X$ is a topological space, we denote by $\CO(X)$ the set of
all clopen (= closed and open) subsets of $X$, and by $\KO(X)$ the set of all compact open subsets of $X$. Obviously,
$(\CO(X),\cup,\cap,\stm,\ems, X)$ $(=(\CO(X),\sbe))$ is a Boolean algebra.
If  $M$ is a subset of $X$, we
denote by  $\cl_X(M)$ (or simply by $\cl(M)$)
the closure of $M$ in $(X,\TT)$.

If $\CC$ is a category, we denote by $\card\CC$ the class of the objects of $\CC$ and by $\CC(X,Y)$ the set of all
   $\CC$-morphisms between two $\CC$-objects $X$ and $Y$.

We denote by:

\begin{itemize}

\item $\Set$ the category of  sets and  functions,

\item  $\Top$ the category of  topological spaces and continuous maps,

\item  $\ZH$ the category of all zero-dimensional Hausdorff spaces and continuous maps,

\item $\ZHC$ the category of all compact Hausdorff zero-dimensional spaces (= {\em Stone spaces}) and their continuous maps,

\item  $\ZHLC$ the category of all locally compact Hausdorff zero-dimensional spaces (= {\em Boolean spaces}) and their continuous maps,

\item $\Bool$ the category of  Boolean algebras and Boolean  homomorphisms,

\item $\Caba$ the category of all complete atomic Boolean algebras and all complete Boolean homomorphisms between them.
\end{itemize}

   The main reference books for all notions which are not defined here
 are \cite{AHS,MacLane,kop89,E}.

 \section{Preliminaries}

We will first recall briefly the Stone Duality Theorem and the Tarski Duality Theorem, and will fix the notation.

\begin{nist}\label{nistone}
\rm We will denote by $\co:\Top\lra\Bool$ the contravariant functor which assigns to every $X\in|\Top|$ the Boolean algebra $(\CO(X),\sbe)$ and to every $f\in\Top(X,Y)$, the Boolean homomorphism $\co(f):\co(Y)\lra\co(X)$ defined by $\co(f)(U)\df f\inv(U)$, for every $U\in\CO(Y)$.

Now we will briefly describe  the Stone duality \cite{ST}
between the categories $\Bool$ and $\ZHC$ using its presentation
given in \cite{H}. We will define two contravariant functors
$$\SSS :\Bool\lra\ZHC \ \ \mbox{ and } \ \ \TTT :\ZHC\lra \Bool.$$ For any Boolean algebra $A$, we let the
space $\SSS (A)$  to be the set
$$X_A\df\Bool(A,\2)$$ endowed with a topology $\TT_A$ having as a
closed base the family $\{s_A(a)\st a\in A\}$, where
\begin{equation}\label{sofa}
s_A(a)\df\{x\in X_A\st x(a)=1\},
\end{equation}
for every $a\in A$; then $\SSS (A)= (X_A,\TT_A)$ is a
 Stone
space. Note that the family $\{s_A(a)\st a\in A\}$ is also an open base of
the space $(X_A,\TT_A)$.

If $\p\in\Bool(A,B)$, then we define $\SSS (\p): \SSS (B)\lra \SSS (A)$
by the formula $\SSS (\p)(y)\df y\circ\p$ for every $y\in \SSS (B)$.
It is easy to see that $\SSS $ is a contravariant functor.

The contravariant functor $\TTT$ is defined to be the restriction of the contravariant functor $\co$ to the category $\ZHC$.

Also, we recall that {\em the Stone map}
\begin{equation}\label{stonemap}
s_A:A\lra \TTT (\SSS (A)), \ \ a\mapsto s_A(a),
\end{equation}
is a $\Bool$-isomorphism.
\end{nist}

\begin{nist}\label{tar}
\rm We will need the Tarski Duality between the categories $\Set$
and $\Caba$. It consists of two contravariant functors
$${\sf P}:\Set\lra\Caba \ \  \mbox{ and } \ \ {\sf At}:\Caba\lra\Set$$
which are defined as follows. For every set $X$, $${\sf P}(X)\df
(P(X),\sbe).$$ If $f\in\Set(X,Y)$, then ${\sf P}(f): {\sf P}(Y)\lra
{\sf P}(X)$ is defined by the formula $${\sf P}(f)(M)\df f\inv(M),$$ for
every $M\in P(Y)$. Further, for every $B\in|\Caba|$,
$${\sf At}(B)\df\At(B);$$ if $\s\in\Caba(B,B\ap)$, then
${\sf At}(\s):{\sf At}(B\ap)\lra {\sf At}(B)$ is defined by the formula
$${\sf At}(\s)(x\ap)\df \bigwedge\{b\in B\st x\ap\le\s(b)\},$$ for every
$x\ap\in \At(B\ap)$.
%
%
%
\end{nist}

Now we will list some simple facts about  some well known constructions of  Boolean homomorphism. The notation which we will introduce will be used  throughout the paper.

\begin{nist}\label{hats}
\rm
Let $\a\in\Bool(A,B)$ and $x\in\At(B)$. Then it is easy to see that the map $$\a_x:A\lra \2$$ defined by
$\a_x(a)=1\Leftrightarrow x\le\a(a)$, for $a\in A$, is a Boolean homomorphism. We put $$X_\a\df\{\a_x\st x\in\At(B)\}.$$
{\em Note that if  $\a$ is a complete Boolean homomorphism, then, for every $x\in \At(B)$, $\a_x$ is a complete Boolean homomorphism as well.}
We put $$h_\a:\At(B)\lra X_\a,\ \ x\mapsto \a_x.$$ It is easy to see that {\em if  every atom of $B$ is a meet of some
elements of $\a(A)$, then $h_\a$ is a bijection.}


If $X$ is a
topological space,  $A=(\CO(X),\sbe)$, $B=\PPP(X)$
and $\a$ is the inclusion map, then, obviously, $X=\At(B)$ and the map $\a_x$ is defined by $\a_x(U)=1\Leftrightarrow x\in U$, for every $U\in A$ and every $x\in X$. In order to simplify the notation, for such $A$ and $B$, we will write $\hat{x}$
instead of $\a_x$.
({\em Note that every $\hat{x}$ is a complete Boolean homomorphism.}) Thus, in such a case, by $$\hat{x}:\co(X)\lra\2$$ we will understand the map defined by $\hat{x}(U)=1\Leftrightarrow x\in U$, for every $U\in \co(X)$; also, we will write
$\hat{X}$ instead of $X_\a$, and
$\hat{h}_X$
instead of $h_\a$, i.e.,
$$\hat{X}=\{\hat{x}:\co(X)\lra\2\st x\in X\}$$
and
$$\hat{h}_{X}:X\lra \hat{X},\ \ x\mapsto \hat{x}.$$
Note that  if the family $\CO(X)$ {\em $T_0$-separates the points of $X$} (i.e., for every $x,y\in X$ such that $x\neq y$,  there exists $U\in \CO(X)$ with $|U\cap\{x,y\}|=1$), then {\em the map
$\hat{h}_{X}$
is a bijection.}
%
\end{nist}

We have to recall some definitions and results from \cite{DD} which will be of great importance for our exposition.

\begin{defi}\label{zalg}{\rm \cite{DD}}
\rm A pair $(A,X)$, where $A\in|\Bool|$
and $X\sbe
\Bool(A,\2)$, is called a {\em Boolean z-algebra} (briefly, {\em
z-algebra}; abbreviated as ZA) if for each $a\in A^+$ there exists
$x\in X$ such that $x(a)=1$.
\end{defi}

\begin{fact}\label{zalgf}{\rm \cite{DD}}
 A pair $(A,X)$ is a z-algebra \tiff $A$ is a Boolean algebra and $X$ is
 a dense subset of\/ $\SSS (A)$.
\end{fact}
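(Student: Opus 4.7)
The plan is to unpack both sides via the basic open sets $s_A(a)$ of $\SSS(A)$ and use the fact (from the Stone Duality recalled in 2.1) that $s_A\colon A\to \CO(\SSS(A))$ is a Boolean isomorphism.

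First I would recall that the family $\{s_A(a) : a\in A\}$ is simultaneously a closed base and an open base for the topology on $\SSS(A)=\Bool(A,\2)$. Hence density of a subset $X\sbe\SSS(A)$ is equivalent to the condition that $s_A(a)\cap X\neq\ems$ for every non-empty basic open set $s_A(a)$. Since $s_A$ is a $\Bool$-isomorphism, we have $s_A(a)=\ems$ iff $a=0$; therefore the non-empty basic open sets are exactly those of the form $s_A(a)$ with $a\in A^+$.

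For the $(\Rightarrow)$ direction, assume $(A,X)$ is a z-algebra in the sense of Definition \ref{zalg}. Pick any basic open $s_A(a)\neq\ems$; then $a\in A^+$, and by the z-algebra condition there exists $x\in X$ with $x(a)=1$, i.e., $x\in s_A(a)\cap X$. Thus $X$ intersects every non-empty basic open set and is dense in $\SSS(A)$. For the $(\La)$ direction, assume $X$ is a dense subset of $\SSS(A)$. Given $a\in A^+$, the set $s_A(a)$ is non-empty by the remark above, so by density $s_A(a)\cap X\neq\ems$; any $x$ in this intersection satisfies $x(a)=1$ by definition of $s_A$, verifying the z-algebra condition.

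There is no genuine obstacle here: the statement is essentially a translation between the set-theoretic formulation of the z-algebra axiom and the topological formulation of density, mediated by the canonical identification of $A$ with $\CO(\SSS(A))$ via $s_A$. The only point that must be flagged is the use of the Stone representation to know that $s_A(a)\neq\ems$ whenever $a\neq 0$ (equivalently, that every non-zero element of a Boolean algebra is sent to $1$ by some homomorphism into $\2$), but this is already encoded in the fact that $s_A$ is a $\Bool$-isomorphism recalled in \ref{nistone}.
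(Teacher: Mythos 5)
Your proof is correct and follows the same route one would expect (and that \cite{DD} takes): translating the z-algebra axiom into the statement that $X$ meets every non-empty basic open set $s_A(a)$, $a\in A^+$, using that $\{s_A(a)\st a\in A\}$ is an open base and that $s_A(a)\nes$ exactly when $a\neq 0$ (the Stone representation). Nothing is missing; the remark that the latter fact is the only non-trivial input is exactly the right point to flag.
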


\begin{nota}\label{zalgn}
\rm If $A$ is a Boolean algebra and $X\sbe \Bool(A,\2)$,  we set
$$s_A^X(a)\df X\cap s_A(a)$$ for each $a\in A$, defining in such a way a map $$s_A^X: A\lra \PPP(X), \ \
a\mapsto s_A^X(a).$$
\end{nota}


\begin{defi}\label{dzalgn}{\rm \cite{DD}}
\rm A z-algebra $(A,X)$ is called a {\em Boolean dz-algebra}
(briefly, {\em dz-algebra}; abbreviated as DZA) if $s_A^X(A)=\CO(X)$ (where $X$ is regarded as a subspace of $\SSS(A)$).
\end{defi}

\begin{pro}\label{zboolpro}{\rm \cite{DD}}
There is a category $\DZA$  whose objects are all dz-algebras  and
whose morphisms between any two $\DZA$-objects   $(A,X)$ and
$(A\ap,X\ap)$ are all pairs $(\p,f)$ such that
$\p\in\Bool(A,A\ap)$, $f\in\Set(X\ap,X)$ and $x\ap\circ\p=f(x\ap)$
for every $x\ap\in X\ap$. The composition $(\p\ap,f\ap)\circ (\p,f)$ between two $\DZA$-morphisms
$(\p,f):(A,X)\lra(A\ap,X\ap)$ and
$(\p\ap,f\ap):(A\ap,X\ap)\lra (A'',X'')$ is defined to be the $\DZA$-morphism
$(\p\ap\circ\p, f\circ f\ap):(A,X)\lra (A'',X'')$; the identity
morphism of a $\DZA\ap$-object  $(A,X)$ is defined to be $(id_A,id_X)$.
\end{pro}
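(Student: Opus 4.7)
The plan is to verify the three standard ingredients required for $\DZA$ to be a category: that the prescribed composition of two $\DZA$-morphisms is again a $\DZA$-morphism, that this composition is associative, and that the prescribed identities act as two-sided units. Since the underlying data of a $\DZA$-morphism is a pair consisting of a $\Bool$-morphism and a $\Set$-morphism, the routine parts (types match, associativity, identity laws) will follow immediately from the corresponding facts in $\Bool$ and $\Set$. The only content lies in checking that the compatibility condition $x\ap\circ\p = f(x\ap)$ is preserved under composition.

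For the key step, suppose $(\p,f)\colon(A,X)\to(A\ap,X\ap)$ and $(\p\ap,f\ap)\colon(A\ap,X\ap)\to(A'',X'')$ are $\DZA$-morphisms. The proposed composite $(\p\ap\circ\p, f\circ f\ap)$ has the correct type since $\p\ap\circ\p\in\Bool(A,A'')$ and $f\circ f\ap\in\Set(X'',X)$. For an arbitrary $x''\in X''$, I would compute
$$x''\circ(\p\ap\circ\p) = (x''\circ\p\ap)\circ\p = f\ap(x'')\circ\p = f(f\ap(x'')) = (f\circ f\ap)(x''),$$
where the second equality uses the compatibility of $(\p\ap,f\ap)$ applied to $x''\in X''$, and the third uses the compatibility of $(\p,f)$ applied to $f\ap(x'')\in X\ap$. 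This verifies that the composite is again a $\DZA$-morphism.

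For the identities, I would check directly that $(id_A,id_X)$ is a $\DZA$-morphism from $(A,X)$ to itself: both components are morphisms in the appropriate categories, and $x\circ id_A = x = id_X(x)$ for all $x\in X$. Its behaviour as left and right unit follows from the identity laws in $\Bool$ and $\Set$ applied coordinatewise. Associativity of $(\p\ap,f\ap)\circ(\p,f)=(\p\ap\circ\p,f\circ f\ap)$ is inherited immediately from associativity of composition in $\Bool$ (first coordinate) and in $\Set$ (second coordinate, where the order is reversed).

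I do not foresee any substantive obstacle: the proposition is essentially a bookkeeping verification. Its main function is to fix the compositional conventions, in particular that the $\Set$-components $f,f\ap$ compose in the reverse order to the $\Bool$-components $\p,\p\ap$, reflecting the contravariance built into the definition of a $\DZA$-morphism. The only place one must exercise slight care is in tracking the direction of each arrow, which is why I have emphasised the type-check in the display above.
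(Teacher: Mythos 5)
Your verification is correct and is the standard (indeed the only natural) argument: the compatibility condition is preserved under composition by the chain $x''\circ(\p\ap\circ\p)=f\ap(x'')\circ\p=f(f\ap(x''))$, and associativity and identities are inherited coordinatewise from $\Bool$ and $\Set$. The paper merely cites this proposition from \cite{DD} without reproducing a proof, and your bookkeeping check supplies exactly what is needed.
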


\begin{theorem}\label{zduality}{\rm \cite{DD}}
The categories\/ $\ZH$ and\/ $\DZA$ are dually equivalent.
\end{theorem}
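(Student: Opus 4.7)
My plan is to construct two explicit contravariant functors $F:\ZH\lra\DZA$ and $G:\DZA\lra\ZH$ and exhibit natural isomorphisms $\Id_{\ZH}\cong G\circ F$ and $\Id_{\DZA}\cong F\circ G$. The key ingredients are the map $\hat h_X$ and the Stone-type map $s^X_A$ recalled in \ref{hats} and \ref{zalgn}.

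Define $F$ on objects by $F(X)\df(\CO(X),\hat X)$ and on a morphism $f\in\ZH(X,Y)$ by $F(f)\df(\co(f),\hat f)$, where $\hat f:\hat X\lra\hat Y$ sends $\hat x$ to $\widehat{f(x)}$. The pair $F(X)$ is a z-algebra since for any $U\in\CO(X)^+$ one picks $x\in U$ and gets $\hat x(U)=1$. Since $X$ is zero-dimensional Hausdorff, $\CO(X)$ $T_0$-separates its points, so by \ref{hats} the map $\hat h_X:X\lra\hat X$ is a bijection; the identity $\hat h_X(U)=s^{\hat X}_{\CO(X)}(U)$ for $U\in\CO(X)$ then promotes it to a homeomorphism, because $\CO(X)$ is a base of $X$ and $\{s^{\hat X}_{\CO(X)}(U):U\in\CO(X)\}$ is a base of $\hat X$ regarded as a subspace of $\SSS(\CO(X))$. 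Consequently $s^{\hat X}_{\CO(X)}(\CO(X))=\CO(\hat X)$, so $F(X)$ is a dz-algebra. The morphism axiom $\hat x\circ\co(f)=\hat f(\hat x)$ is a one-line calculation from $\hat x(f\inv(V))=[f(x)\in V]=\widehat{f(x)}(V)$, and contravariant functoriality of $F$ is then routine.

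Define $G$ on objects by $G(A,X)\df X$ with the subspace topology from $\SSS(A)$ (hence zero-dimensional Hausdorff), and on morphisms by $G(\p,f)\df f$. Continuity of $f:X'\lra X$ follows because $f$ agrees with the restriction of the continuous map $\SSS(\p):\SSS(A')\lra\SSS(A)$: for every $x'\in X'$, $\SSS(\p)(x')=x'\circ\p=f(x')$ by the $\DZA$-axiom. Functoriality of $G$ is again routine. For $\eta:\Id_{\ZH}\lra GF$, take $\eta_X\df\hat h_X$, known to be a homeomorphism; for $\epsilon:\Id_{\DZA}\lra FG$, take $\epsilon_{(A,X)}\df(s^X_A,\hat h_X\inv)$. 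Surjectivity of $s^X_A$ is the dz-condition; injectivity comes from the z-condition (if $a\we b^*\neq 0$, any $x\in X$ with $x(a\we b^*)=1$ separates $s^X_A(a)$ from $s^X_A(b)$); the required axiom $\hat x\circ s^X_A=x$ reduces to $\hat x(s^X_A(a))=x(a)$. An inverse $\DZA$-morphism is $((s^X_A)\inv,\hat h_X)$, so $\epsilon_{(A,X)}$ is indeed an isomorphism in $\DZA$; naturality in both cases is a direct diagram chase.

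The main obstacle is coordinating the two components of a $\DZA$-morphism (one going forward on Boolean algebras, the other backward on spaces, with the corresponding twist in the composition law) and, more substantively, upgrading $\hat h_X$ from a bijection to a homeomorphism. This is the point at which the zero-dimensionality hypothesis on $X$ becomes essential: it forces $\CO(X)$ to be a base of $X$ and thus matches the subspace topology that $\hat X$ inherits from $\SSS(\CO(X))$. Once this is in hand, both the dz-algebra condition on $F(X)$ and the isomorphism property of $\epsilon_{(A,X)}$ follow smoothly from the Stone and Tarski machinery already recalled.
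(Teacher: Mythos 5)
Your proposal is correct and follows essentially the same route as the paper: the same functors $F(X)=(\co(X),\hat X)$, $F(f)=(\co(f),\hat f)$, $G(A,X)=X$, $G(\p,f)=f$, with the natural isomorphisms realized by $\hat h_X$ and $(s_A^X,\hat h_X^{-1})$ exactly as in \cite{DD}, to which the paper's sketch defers the details you have filled in.
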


\noindent{\em Sketch of the proof.} For every  $X\in|\ZH|$, we let
$$F(X)\df(\co (X),\hat{X}).$$
Then $F(X)\in|\DZA|$. For  $f\in\ZH(X,Y)$, set
 $$F(f)\df (\co (f),\hat{f}),$$
 where
$$\hat{f}:\hat{X}\lra \hat{Y}$$ is defined by
$$\hat{f}(\hat{x})\df\widehat{f(x)}$$ for every $x\in X$. Then
$F(f)\in\DZA(F(Y),F(X))$ and $$F:\ZH\lra\DZA$$ is a contravariant functor.

For every $(A,X)\in|\DZA|$, we set $$G(A,X)\df X,$$ where $X$ is regarded as a subspace of
$\SSS (A)$. Then $G(A,X)\in|\ZH|$.

If
$(\p,f):(A,X)\lra (A\ap,X\ap)$
is a $\DZA$-morphism, we put
$$G(\p,f)\df f.$$
Then $G(\p,f)$ is a continuous function and $$G:\DZA\lra\ZH$$ is
a contravariant functor. Moreover, the functors $F\circ G$ and $G\circ F$ are
naturally isomorphic to the corresponding identity functors.
\sqs

\begin{defi}\label{zhomo}{\rm \cite{DD}}
\rm Let $A$ be a Boolean algebra and $B\in|\Caba|$. A Boolean
monomorphism $\a:A\lra B$ is said to be a {\em Boolean z-map}
(briefly, {\em z-map}) if every atom of $B$ is a meet of some
elements of $\a(A)$. A z-map $\a:A\lra B$ is called a {\em maximal
Boolean z-map} (briefly, {\em mz-map}) if $\CO(X_\a)=s_A^{X_\a}(A)$, where $X_\a$ is regarded as a subspace of $\SSS(A)$
(see \ref{hats} and \ref{zalgn} for the notation).
\end{defi}

\begin{pro}\label{mboolpro}{\rm \cite{DD}}
There is a category $\MBool$  whose objects are all mz-maps  and
whose morphisms between any two $\MBool$-objects  $\a:A\lra B$ and
$\a\ap:A\ap\lra B\ap$ are all pairs $(\p,\s)$ such that
 $\p\in\Bool(A,A\ap)$, $\s\in\Caba(B,B\ap)$ and
$\a\ap\circ\p=\s\circ\a$, i.e., the diagram
\begin{center}
$\xymatrix{A\ar[rr]^{\p}\ar[d]_{\a} && A\ap\ar[d]^{\a\ap}\\
B\ar[rr]^{\s} && B\ap
}$
\end{center}
is commutative.
The composition $(\p\ap,\s\ap)\circ (\p,\s)$ between two $\MBool$-morphisms
$(\p,\s):\a\lra\a\ap$ and
$(\p\ap,\s\ap):\a\ap\lra \a''$ is defined to be the $\MBool$-morphism
$(\p\ap\circ\p, \s\ap\circ \s):\a\lra \a''$; the identity map of
an $\MBool$-object  $\a:A\lra B$ is defined to be $(id_A,id_B)$.
\end{pro}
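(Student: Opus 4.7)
The statement is the standard ``compose commutative squares'' construction, so the plan is essentially verification of the category axioms; I would present it in the order below.

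First, I would verify that the composition law is well-defined, i.e.\ that if $(\p,\s):\a\lra\a\ap$ and $(\p\ap,\s\ap):\a\ap\lra\a''$ are $\MBool$-morphisms, then $(\p\ap\circ\p,\s\ap\circ\s)$ is also an $\MBool$-morphism from $\a$ to $\a''$. The component-wise data are clearly in the right hom-sets: $\p\ap\circ\p\in\Bool(A,A'')$ because $\Bool$ is a category, and $\s\ap\circ\s\in\Caba(B,B'')$ because $\Caba$ is a category (composition of complete Boolean homomorphisms is a complete Boolean homomorphism). The only non-automatic check is the commutativity condition, which is the standard horizontal pasting of two commutative squares:
\begin{equation*}
\a''\circ(\p\ap\circ\p)=(\a''\circ\p\ap)\circ\p=(\s\ap\circ\a\ap)\circ\p=\s\ap\circ(\a\ap\circ\p)=\s\ap\circ(\s\circ\a)=(\s\ap\circ\s)\circ\a.
\end{equation*}

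Next, I would check that $(id_A,id_B)$ is an $\MBool$-morphism from $\a:A\lra B$ to itself: $id_A\in\Bool(A,A)$ and $id_B\in\Caba(B,B)$ are clear, and $\a\circ id_A=\a=id_B\circ\a$ gives the required commutativity. Associativity of composition and the unit laws follow immediately from the corresponding properties in $\Bool$ and $\Caba$ applied coordinate-wise: for instance,
\begin{equation*}
((\p'',\s'')\circ(\p\ap,\s\ap))\circ(\p,\s)=((\p''\circ\p\ap)\circ\p,(\s''\circ\s\ap)\circ\s)=(\p''\circ(\p\ap\circ\p),\s''\circ(\s\ap\circ\s)),
\end{equation*}
which equals $(\p'',\s'')\circ((\p\ap,\s\ap)\circ(\p,\s))$; and $(\p,\s)\circ(id_A,id_B)=(\p,\s)=(id_{A\ap},id_{B\ap})\circ(\p,\s)$.

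I do not expect any real obstacle here: the definition of an object of $\MBool$ (the mz-map condition on $\a$) plays no role in the category-axiom verification, since the commuting square condition is preserved purely formally. The proposition is essentially the observation that the class of mz-maps, viewed as objects, inherits a category structure from the arrow category built over the pair $(\Bool,\Caba)$. Accordingly the written proof can be compressed to the two displays above together with the remark that all remaining axioms reduce to the category axioms in $\Bool$ and $\Caba$.
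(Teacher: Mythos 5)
Your verification is correct and complete: the paper states this proposition without proof (it is merely recalled from \cite{DD}), and the only mathematical content is exactly the routine check you carry out --- well-definedness of the composition by horizontal pasting of commutative squares, the identity morphisms, and coordinate-wise associativity and unit laws inherited from $\Bool$ and $\Caba$. Your closing observation that the mz-condition on the objects plays no role in the category-axiom verification is also accurate, so nothing is missing.
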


\begin{theorem}\label{nzduality}{\rm \cite{DD}}
The categories\/ $\ZH$ and\/ $\MBool$ are dually equivalent.
\end{theorem}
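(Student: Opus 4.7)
The plan is to establish the dual equivalence by factoring through Theorem \ref{zduality}, constructing a covariant equivalence $\Phi: \DZA \lra \MBool$. The intuition is that the data of an mz-map $\a: A \to B$ with its complete atomic codomain corresponds, under Tarski duality, to the data of a dz-algebra $(A, X_\a)$ whose point set is the image of $\At(B)$ in $\Bool(A, \2)$ under the assignment $x \mapsto \a_x$.

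Define $\Phi$ on objects by $\Phi(A, X) \df (s_A^X: A \lra \PPP(X))$, where $s_A^X$ is the map from \ref{zalgn}. This is an mz-map: $\PPP(X)$ is a complete atomic Boolean algebra; injectivity of $s_A^X$ is the z-algebra condition; each singleton $\{x\}$, being the generic atom of $\PPP(X)$, satisfies $\{x\} = \bigcap\{s_A^X(a) : x(a) = 1\}$ because $X$ sits inside the Hausdorff space $\SSS(A)$; and the trivial computation $(s_A^X)_{\{x\}} = x$ yields $X_{s_A^X} = X$, so the mz-condition $\CO(X_{s_A^X}) = s_A^{X_{s_A^X}}(A)$ collapses to the dz-condition $\CO(X) = s_A^X(A)$. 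On morphisms, set $\Phi(\p, f) \df (\p, \s_f)$, where $\s_f: \PPP(X) \lra \PPP(X')$ is the preimage map $M \mapsto f^{-1}(M)$; the defining square commutes because $\s_f(s_A^X(a)) = \{x' \in X' : f(x')(a) = 1\} = \{x' \in X' : x'(\p(a)) = 1\} = s_{A'}^{X'}(\p(a))$, using the $\DZA$-morphism identity $x' \circ \p = f(x')$.

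An inverse $\Psi: \MBool \lra \DZA$ is defined by $\Psi(\a: A \to B) \df (A, X_\a)$ on objects and $\Psi(\p, \s) \df (\p, f_{\p,\s})$ on morphisms, with $f_{\p,\s}(x') \df x' \circ \p$. The non-trivial check that $f_{\p,\s}(x') \in X_\a$ rests on the identity $x' \circ \p = \a_{\At(\s)(x')}$, with $\At(\s)(x') = \bigwedge\{b \in B : x' \le \s(b)\}$ the Tarski-dual atom of $x'$; this relies on completeness of $\s$ and atomicity of $B$. One then verifies $\Psi \circ \Phi = \Id_\DZA$ directly, and the natural isomorphism $\Phi \circ \Psi \cong \Id_\MBool$ is supplied at an mz-map $\a: A \to B$ by the pair $(\Id_A, \widetilde{\s}_\a)$, where $\widetilde{\s}_\a: B \lra \PPP(X_\a)$ is the $\Caba$-isomorphism induced via Tarski duality by the bijection $h_\a: \At(B) \to X_\a$; commutativity of the defining square reduces to $\widetilde{\s}_\a(\a(a)) = \{\a_x : \a_x(a) = 1\} = s_A^{X_\a}(a)$.

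Composing $\Phi$ with the duality $\ZH \simeq \DZA^{op}$ of Theorem \ref{zduality} then yields the desired dual equivalence between $\ZH$ and $\MBool$. The principal technical hurdle is verifying that $\Psi$ is well-defined on morphisms via the Tarski-atom formula; once this identity and the object-level identification $X_{s_A^X} = X$ are in place, the remaining functoriality and naturality verifications are routine Boolean-algebraic calculations.
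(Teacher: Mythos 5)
Your argument is correct, but it takes a different route from the paper's. The paper proves Theorem \ref{nzduality} by a direct construction of the two contravariant functors between $\ZH$ and $\MBool$: $\FFF(X)\df (i_X:\co(X)\lra\PPP(X))$, $\FFF(f)\df(\co(f),\PPP(f))$, and $\GGG(\a)\df X_\a$ with $\GGG(\p,\s)\df f_\s$, $f_\s(\a\ap_{x\ap})=\a_{\Att(\s)(x\ap)}$. You instead factor the duality as $\ZH\simeq\DZA^{op}\simeq\MBool^{op}$, first building a covariant equivalence $\Phi:\DZA\lra\MBool$ and then composing with Theorem \ref{zduality}. Your $\Phi(A,X)\df s_A^X$ is precisely the functor $F\ap$ of the paper's Theorem \ref{zeq} (stated in Section 4), though your quasi-inverse $\Psi(\a)\df(A,X_\a)$ is leaner than the paper's $G\ap(\a)=(\co(X_\a),\widehat{X_\a})$ and gives $\Psi\circ\Phi=Id_{\DZA}$ on the nose. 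The essential computations coincide in both treatments: the identification $X_{s_A^X}=X$, and the Tarski-atom identity $\a\ap_{x\ap}\circ\p=\a_{\Att(\s)(x\ap)}$ (which indeed requires completeness of $\s$ and atomicity of $B$) that makes the morphism assignment well defined. What the direct route buys is the concrete description of the dual of $X$ as the inclusion $\co(X)\hra\PPP(X)$ and of the dual space of $\a$ as $X_\a$ with no natural isomorphism to carry around; what your route buys is a cleaner separation of the purely algebraic equivalence $\DZA\simeq\MBool$ from the topological content of Theorem \ref{zduality} (note that $\Phi\circ F$ sends $X$ to $s_{\co(X)}^{\hat{X}}$, which is only naturally isomorphic to $i_X$ --- compare the remark on $\FFF_0$ versus $\FFF$ in the proof of Theorem \ref{genstonecnew111}). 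Two small points you should make explicit: $(A,X_\a)$ satisfies the z-algebra condition because $\a$ is injective and $B$ is atomic, and the bijectivity of $h_\a$ underlying your $\widetilde{\s}_\a$ comes from the z-map condition (every atom of $B$ is a meet of elements of $\a(A)$); neither affects the soundness of the proof.
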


\noindent{\em Sketch of the proof.}
We will  define two contravariant functors
$$\FFF:\ZH\lra \MBool\ \ \mbox{ and  }\ \ \GGG:\MBool\lra\ZH.$$

For every $X\in|\ZH|$, we put $$\FFF(X)\df i_X,$$ where $i_X:\co(X)\lra\PPP(X)$ is the inclusion map.
Then $i_X\in|\MBool|$. For $f\in\ZH(X,Y)$, we set $$\FFF(f)\df (\co(f),\PPP(f)).$$
Then $\FFF(f)$ is a $\MBool$-morphism.

For $(\a:A\lra B)\in|\MBool|$, we put $$\GGG(\a)\df X_\a.$$
Clearly, the set $X_\a$ endowed with the subspace topology from the space $\SSS(A)$ is a $\ZH$-object.
For $(\p,\s)\in\MBool(\a,\a\ap)$, with $\a:A\lra B$ and $\a\ap:A\ap\lra B\ap$, we set $$\GGG(\p,\s)\df f_\s,$$
where $f_\s:X_{\a\ap}\lra X_\a$ is defined by the formula $f_\s(\a\ap_{x\ap})=a_{\Att(\s)(x\ap)}$, for every $x\ap\in\At(B\ap)$.
Then $\GGG(\p,\s)$ is a $\ZH$-morphism.

We obtain that $\FFF$ and $\GGG$ are contravariant functors. Their compositions are naturally isomorphic to the corresponding identity functors.
\sqs

We have to recall as well some results from \cite{D-PMD12}  concerning the Dimov extension of the Stone Duality Theorem to the category $\ZHLC$.

\begin{nist}\label{p1}
\rm Recall that if $(A,\le)$ is a poset and $B\sbe A$ then $B$ is
said to be a {\em dense subset of} $A$ if for any $a\in
A^+$ there exists $b\in B^+$ such that $b\le a$;
when $(B,\le_1)$ is a poset and $f:A\lra B$ is a map, then we will
say that $f$ is a {\em dense map}\/ if $f(A)$ is a dense subset of
$B$.

Recall that a {\em frame} is a complete lattice $L$ satisfying
the infinite distributive law $a\we\bigvee S=\bigvee\{a\we s\st
s\in S\}$, for every $a\in L$ and every $S\sbe L$.

Let $A$ be a distributive $\{0\}$-pseudolattice and $Idl(A)$ be
the frame of all ideals of $A$. If $J\in Idl(A)$ then we will
write $\neg_A J$ (or simply $\neg J$) for the pseudocomplement of
$J$ in $Idl(A)$ (i.e.,  $\neg J=\bv\{I\in Idl(A)\st I\we
J=\{0\}\}$). Note that $\neg J=\{a\in A\st (\fa b\in J)(a\we
b=0)\}$ (see Stone \cite{ST}). Recall that an ideal $J$ of $A$ is
called {\em simple} (Stone \cite{ST}) if $J\vee\neg J= A$ (i.e.,  $J$ is a complemented element of the frame $Idl(A)$). As it
is proved in \cite{ST}, the set $Si(A)$ of all simple ideals of
$A$ is a Boolean algebra with respect to the lattice operations in
$Idl(A)$.
\end{nist}

\begin{defi}\label{deflba}{\rm \cite{D-PMD12}}
\rm A pair $(A,I)$, where $A$ is a Boolean algebra and $I$ is an
ideal of $A$ (possibly non proper) which is dense in $A$ (shortly,
dense ideal), is called a {\em local Boolean algebra} (abbreviated
as LBA).

Let $\LBA$ be the category whose objects are all LBAs and whose
morphisms are all functions $\p:(A, I)\lra(B, J)$ between the
objects of $\LBA$ such that $\p:A\lra B$ is a Boolean homomorphism
satisfying the following condition:

\smallskip

\noindent(LBA) For every $b\in J$ there exists $a\in I$ such that
$b\le \p(a)$;

\smallskip

\noindent let the composition between the morphisms of $\LBA$ be
the usual composition between functions, and the $\LBA$-identities
be the identity functions.
\end{defi}

Note that two LBAs $(A,I)$ and $(B,J)$ are
$\LBA$-isomorphic  if there exists a
Boolean isomorphism $\p:A\lra B$ such that $\p(I)=J$.

\begin{defi}\label{defzlba}{\rm \cite{D-PMD12}}
\rm An LBA $(A, I)$  is called a {\em ZLB-algebra} (briefly, {\em
ZLBA}) if, for every $J\in Si(I)$, the join $\bv_A J$($=\bv_A
\{a\st a\in J\}$) exists.
\end{defi}

\begin{nota}\label{notalia}
For every LBA $(A, I)$, we put
$$L_I^A=\{x\in \Bool(A,\2)\st 1\in x(I)\}.$$
\end{nota}

\begin{pro}\label{cox}{\rm \cite{D-PMD12}}
Let $(A,I)$ be an LBA.  Then $(A,I)$ is a ZLBA
iff $L_I^A\cap s_A(A)=\CO(L_I^A)$ (where $L_I^A$ is regarded as a subspace of $\SSS(A)$).
\end{pro}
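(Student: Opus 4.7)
My plan is to use the explicit description $L_I^A = \bigcup_{a\in I} s_A(a)$, which holds because $x\in L_I^A$ iff $x(b)=1$ for some $b\in I$. Since each $s_A(a)$ is clopen and compact in $\SSS(A)$ and contained in $L_I^A$ whenever $a\in I$, the set $L_I^A$ is an open subspace of $\SSS(A)$ with $\{s_A(a):a\in I\}$ a base of compact open sets. Moreover, since $I$ is dense in $A$, every nonempty $s_A(a)$ meets some $s_A(b)\sbe L_I^A$ with $b\in I^+$, $b\le a$, so $L_I^A$ is dense in $\SSS(A)$. These two facts — the base of compact opens and density — will carry the whole argument.

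For the forward direction, assume $(A,I)$ is a ZLBA and take $U\in\CO(L_I^A)$. Set
\[
J\df\{a\in I\st s_A(a)\sbe U\}\quad\mbox{and}\quad K\df\{a\in I\st s_A(a)\cap U=\ems\}.
\]
These are ideals of $I$ with $J\we K=\{0\}$. Given $b\in I$, the set $s_A(b)\cap U$ is compact and open in $L_I^A$; covering it by basic compact opens from within $U$ and using compactness gives $s_A(b)\cap U=s_A(c)$ for some $c\in J$, and likewise $s_A(b)\stm U=s_A(c')$ for some $c'\in K$, so $b=c\vee c'\in J\vee K$. Hence $J\in Si(I)$. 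By the ZLBA hypothesis, $a_U\df\bv_A J$ exists. Since $s_A$ is a Boolean isomorphism onto $\CO(\SSS(A))$, $s_A(a_U)$ is the closure of $V\df\bigcup_{j\in J}s_A(j)$. Clearly $U=V\sbe s_A(a_U)\cap L_I^A$. Conversely, if $x\in s_A(a_U)\cap L_I^A$, pick $b\in I$ with $x\in s_A(b)$ and decompose $b=c\vee c'$ as above; the open set $s_A(c')$ is disjoint from $V$, hence from $\cl(V)=s_A(a_U)$, so $x\in s_A(c)\sbe U$. Thus $U=s_A(a_U)\cap L_I^A\in s_A^{L_I^A}(A)$.

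For the converse, assume $L_I^A\cap s_A(A)=\CO(L_I^A)$ and let $J\in Si(I)$ with relative complement $K=\neg_I J\in Si(I)$. Put $U\df\bigcup_{j\in J}s_A(j)$, which is open in $L_I^A$. Using $J\vee K=I$, for each $b\in I$ write $b=j\vee k$ with $j\in J$, $k\in K$ and $j\we k=0$; then $s_A(k)\cap U=\ems$ because $k\we j'=0$ for all $j'\in J$, so $s_A(b)\stm U=s_A(k)$ is open. Hence $L_I^A\stm U$ is open and $U\in\CO(L_I^A)$. By hypothesis there is $a\in A$ with $s_A(a)\cap L_I^A=U$. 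Since $s_A(j)\sbe U\sbe s_A(a)$ for every $j\in J$, the element $a$ is an upper bound of $J$ in $A$. If $a'$ is any other upper bound, then $s_A(a')\supseteq U=s_A(a)\cap L_I^A$; were $s_A(a)\not\sbe s_A(a')$, the nonempty clopen $s_A(a)\stm s_A(a')$ would meet the dense set $L_I^A$, contradicting what we just said. So $a\le a'$, proving $a=\bv_A J$ and hence $(A,I)$ is a ZLBA.

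The only delicate point is the forward direction's verification that $s_A(a_U)\cap L_I^A=U$: the supremum of $J$ in $A$ corresponds to the \emph{closure} of $V=\bigcup_{j\in J}s_A(j)$ in $\SSS(A)$, which in general strictly exceeds $V$, and one must argue that the excess lies entirely outside $L_I^A$. The decomposition $b=c\vee c'$ provided by compactness of each $s_A(b)$ inside the clopen-based subspace $L_I^A$, combined with the disjointness of the open $s_A(c')$ from $V$, is precisely what eliminates this excess.
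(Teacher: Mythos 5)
Your proof is correct. Note that the paper itself states Proposition \ref{cox} without proof (it is recalled from \cite{D-PMD12}), so there is no in-paper argument to compare against; your argument --- identifying $L_I^A$ with the open dense subspace $\bigcup_{a\in I}s_A(a)$ of $\SSS(A)$ having $\{s_A(a)\st a\in I\}$ as a base of compact open sets, matching clopen subsets of $L_I^A$ with simple ideals of $I$ via the compactness decomposition $b=c\vee c'$, and using density of $L_I^A$ to identify $s_A(\bv_A J)$ with $\cl\bigl(\bigcup_{j\in J}s_A(j)\bigr)$ --- is complete and sound in both directions, including the delicate verification that the closure's excess over $\bigcup_{j\in J}s_A(j)$ misses $L_I^A$.
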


Let $\ZLBA$ be the full subcategory of the category $\LBA$ having
as objects  all ZLBAs.

\begin{theorem}\label{genstonecnew}{\rm \cite{D-PMD12}}
The categories\/ $\ZHLC$ and\/ $\ZLBA$ are  dually equivalent.
\end{theorem}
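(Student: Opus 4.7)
The plan is to derive this duality from the $\ZH$--$\DZA$ duality (Theorem \ref{zduality}) by identifying $\ZLBA$ with the full subcategory of $\DZA$ consisting of those dz-algebras whose dual space is locally compact. Concretely, I would define contravariant functors $\Phi:\ZHLC\to\ZLBA$ by $\Phi(X)\df(\CO(X),\KO(X))$ on objects and $\Phi(f)\df\co(f)$ on morphisms, and $\Psi:\ZLBA\to\ZHLC$ by $\Psi(A,I)\df L_I^A$ (endowed with the subspace topology from $\SSS(A)$) on objects and $\Psi(\p)(y)\df y\circ\p$ on morphisms, then check that these are well-defined, mutually quasi-inverse contravariant functors.

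For the well-definedness of $\Phi$: in a Hausdorff space compact sets are closed, so $\KO(X)\sbe\CO(X)$; $\KO(X)$ is a proper ideal because compact open sets are closed under finite unions and under clopen subsets, and density of $\KO(X)$ in $\CO(X)$ follows from local compactness combined with zero-dimensionality (any nonempty clopen $U$ contains a clopen set sitting inside a compact neighborhood of a chosen point of $U$). The ZLBA property is the crux, and I would deduce it from Proposition \ref{cox} after establishing the identification
$$L_{\KO(X)}^{\CO(X)}=\hat X$$
inside $\SSS(\CO(X))$: if an ultrafilter $y$ of $\CO(X)$ contains some $K\in\KO(X)$, compactness of $K$ forces the trace filter $\{U\cap K:y(U)=1\}$ to admit a cluster point $p$, and then $y=\hat p$; conversely each $\hat x$ contains some $K\in\KO(X)$ by local compactness. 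Granted this identification, $L\cap s_{\CO(X)}(\CO(X))=\CO(L)$ with $L=L_{\KO(X)}^{\CO(X)}$ is precisely the dz-algebra condition for $(\CO(X),\hat X)$ furnished by Theorem \ref{zduality}, so Proposition \ref{cox} applies. For morphisms, condition (LBA) for $\co(f)$ reduces to finding, for each $K\in\KO(X)$, some $U\in\KO(Y)$ with $f(K)\sbe U$, which is achieved by covering the compact set $f(K)$ by finitely many compact open subsets of $Y$ and taking their union.

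For $\Psi$: density of $I$ in $A$ gives density of $L_I^A$ in $\SSS(A)$, so $(A,L_I^A)$ is a z-algebra, and Proposition \ref{cox} promotes it to a dz-algebra. Local compactness of $L_I^A$ rests on the observation that $s_A(a)\sbe L_I^A$ whenever $a\in I$ (for any $y\in s_A(a)$ one has $y(a)=1$ with $a\in I$), so every point of $L_I^A$ admits a compact open neighborhood of the form $s_A(a)\cap L_I^A=s_A(a)$. For morphisms $\p:(A,I)\to(B,J)$, condition (LBA) guarantees that $\Psi(\p)$ lands in $L_I^A$, and continuity is immediate from $\Psi(\p)\inv(s_A(a)\cap L_I^A)=s_B(\p(a))\cap L_J^B$. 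The natural isomorphism $\Psi\Phi\cong\Id_{\ZHLC}$ is just the restriction of the one from the $\ZH$--$\DZA$ duality, since $\hat h_X:X\to L_{\KO(X)}^{\CO(X)}=\hat X$ is a homeomorphism. For $\Phi\Psi\cong\Id_{\ZLBA}$, the Stone map $s_A:A\to\CO(L_I^A)$ is already a Boolean isomorphism by Proposition \ref{cox}; it remains to show $s_A(I)=\KO(L_I^A)$, one inclusion of which was noted above, while the other follows because a compact open $K=s_A(a)\cap L_I^A$ in $L_I^A$ is covered by finitely many $s_A(b_1),\ldots,s_A(b_n)$ with $b_i\in I$, and then $s_A(a)\cap L_I^A\sbe s_A(b_1\vee\cdots\vee b_n)$ combined with density of $L_I^A$ in $\SSS(A)$ forces $a\le b_1\vee\cdots\vee b_n\in I$, so $a\in I$. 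I expect the main obstacle to be the identification $L_{\KO(X)}^{\CO(X)}=\hat X$ and the ensuing verification that $(\CO(X),\KO(X))$ meets the ZLBA condition via Proposition \ref{cox}; the remaining functoriality and naturality checks are routine.
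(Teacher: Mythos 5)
Your proof is correct, and it rests on the same two pillars as the paper's --- Theorem \ref{zduality} and Proposition \ref{cox} --- but it is organized along a genuinely different route. The paper factors the duality through an intermediate category: it first cuts out the full subcategory $\LDZA$ of $\DZA$ consisting of the dz-algebras $(A,X)$ with $X$ open in $\SSS(A)$, shows (using the fact that a locally compact dense subspace of a Hausdorff space is open) that restricting the $\ZH$--$\DZA$ duality yields a duality between $\ZHLC$ and $\LDZA$ (Theorem \ref{genstonecnew1}), then proves that $\LDZA$ and $\ZLBA$ are \emph{isomorphic} categories via $(A,X)\mapsto(A,I_X)$ and $(A,I)\mapsto(A,X_I)$ (Theorem \ref{genstonecnew2}), and finally checks in \ref{nistboolsp} that the composite functors are literally $\TE_d^t$ and $\TE_d^a$. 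You instead build $\TE_d^t$ and $\TE_d^a$ directly and verify everything in one pass; your key identification $L_{\KO(X)}^{\CO(X)}=\hat X$ and your finite-subcover argument giving $s_A(I)=\KO(L_I^A)$ are precisely the computations the paper carries out in \ref{nistboolsp} and in the proof that $I_{X_I}=I$. The one step where your route is genuinely more elementary is condition (LBA) for $\co(f)$: having the actual spaces at hand, you just cover the compact set $f(K)$ by finitely many compact open subsets of $Y$, whereas the paper, working purely inside $\LDZA$ where only the algebraic data is available, runs a filter/ideal separation argument through the Maximal Ideal Theorem. What the paper's decomposition buys is the sharper statement that $\LDZA$ and $\ZLBA$ are isomorphic (not merely equivalent) and a reusable topological characterization of local compactness on the $\DZA$ side; what yours buys is brevity and a softer toolkit. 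One cosmetic slip: $\KO(X)$ need not be a \emph{proper} ideal (take $X$ compact), but Definition \ref{deflba} explicitly allows non-proper ideals, so nothing is lost.
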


\noindent{\em Sketch of the proof.}
We will define two contravariant functors
$$\TE^a_d:\ZLBA\lra\ZHLC\ \ \mbox{ and }\ \ \TE^t_d:\ZHLC\lra\ZLBA$$
 as follows. For $X\in\card{\ZHLC}$, we set $$\TE^t_d(X)\df (\co(X),  \KO(X)).$$
Then $\TE^t_d(X)$ is a ZLBA. For every $f\in\ZHLC(X,Y)$, we put $$\TE^t_d(f)\df \co(f).$$
Then $\TE^t(f)$ is a $\ZLBA$-morphism.

For every ZLBA $(A, I)$, we set
$$\TE^a_d(A, I)\df L_I^A.$$
Then $\TE^a_d(A, I)\in|\ZHLC|$.

If $\p\in\ZLBA((A, I),(B, J))$, then we define the map
$\TE^a_d(\p):\TE^a_d(B, J)\lra\TE^a_d(A, I)$ by the formula
$$\TE^a_d(\p)(x\ap)\df x\ap\circ\p, \ \  \fa x\ap\in\TE^a_d(B,J).$$
Then $\TE^a_d(\p)$ is a $\ZHLC$-morphism.

Finally, we show that the compositions $\TE^a_d\circ \TE^t_d$ and $\TE^t_d\circ \TE^a_d$ are naturally  isomorphic to the corresponding identity functors.
\sqs

We will need as well the following theorem of M. Stone \cite{Stone} (see also \cite[Theorem 7.25]{kop89}):

\begin{theorem}\label{opensetidlstd}
Let $A\in|\Bool|$ and $(X_A,\TT_A)=\SSS (A)$.
Then there exists an order-preserving bijection (and hence, frame isomorphism)
$$\iota:(Idl(A),\le)\lra (\TT_A,\sbe), \ \ J\mapsto
\bigcup\{s_A(a)\st a\in J\}.$$ If $U\in\TT_A$ then $$J=
\iota\inv(U)=\{a\in A\st s_A(a)\sbe U\}.$$
\end{theorem}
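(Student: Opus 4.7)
The plan is to exhibit an explicit candidate inverse $\kappa:\TT_A\lra Idl(A)$ given by $\kappa(U)\df\{a\in A\st s_A(a)\sbe U\}$, and to verify that $\iota$ and $\kappa$ are mutually inverse, order-preserving maps. Since an order-preserving bijection between complete lattices whose inverse is also order-preserving is automatically a lattice (hence frame) isomorphism, and since the formula for $\iota\inv$ asserted in the statement is precisely the definition of $\kappa$, the theorem will follow once the two compositions are identified with the identities.

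First I would check well-definedness and order-preservation. The set $\iota(J)$ is open as a union of base sets $s_A(a)$. To show $\kappa(U)$ is an ideal, I would invoke the Stone map properties $s_A(a\vee b)=s_A(a)\cup s_A(b)$ and $s_A(a)\sbe s_A(b)\Iff a\le b$, which hold because $s_A:A\lra\CO(X_A)$ is a Boolean isomorphism (see \ref{nistone}); they immediately imply that $\kappa(U)$ is downward-closed and closed under finite joins. Order-preservation of both $\iota$ and $\kappa$ is visible from their definitions. For $\iota\circ\kappa=\Id_{\TT_A}$, given $U\in\TT_A$ the set $\iota(\kappa(U))$ is the union of all base sets contained in $U$, hence equals $U$ because $\{s_A(a)\st a\in A\}$ is an open base of $\TT_A$.

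The one non-formal step, and hence the main obstacle, is to show $\kappa\circ\iota=\Id_{Idl(A)}$. For $J\in Idl(A)$ the inclusion $J\sbe\kappa(\iota(J))$ is trivial. For the reverse, suppose $a\in A$ satisfies $s_A(a)\sbe\bigcup_{b\in J}s_A(b)$. Here I would use compactness of the Stone space: $s_A(a)$ is clopen in $X_A$, hence compact, so the open cover $\{s_A(b)\st b\in J\}$ admits a finite subcover $s_A(b_1),\dots,s_A(b_n)$ with $b_i\in J$. Applying $s_A(b_1)\cup\cdots\cup s_A(b_n)=s_A(b_1\vee\cdots\vee b_n)$ together with injectivity of $s_A$ yields $a\le b_1\vee\cdots\vee b_n$, and since $J$ is closed under finite joins and downward, $a\in J$. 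This completes the two identifications and the frame isomorphism statement is then immediate.
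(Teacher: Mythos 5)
Your proof is correct: the well-definedness of $\kappa(U)$ as an ideal via the isomorphism properties of the Stone map, the base argument for $\iota\circ\kappa=\Id_{\TT_A}$, and the compactness argument (finite subcover of the clopen, hence compact, set $s_A(a)$) for $\kappa\circ\iota=\Id_{Idl(A)}$ are exactly the right ingredients, and the passage from order-isomorphism of complete lattices to frame isomorphism is sound. Note that the paper itself gives no proof of this statement -- it is quoted as a classical result of M.\ Stone with a pointer to \cite[Theorem 7.25]{kop89} -- and your argument is precisely the standard one found there.
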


\section{A new extension of the Stone Duality Theorem to the category $\ZHLC$ and a new proof of Theorem \ref{genstonecnew}}

In this section we will introduce the notion of {\em Boolean ldz-algebra} and with its help we will obtain our first new duality theorem for the category $\ZHLC$, namely, Theorem \ref{genstonecnew1}. Using it, we will present a new proof of the Dimov Duality Theorem \cite{D-PMD12} for the category $\ZHLC$ (cited here as Theorem \ref{genstonecnew}).

\begin{defi}\label{ldzalgn}
\rm A dz-algebra $(A,X)$ is called a {\em Boolean ldz-algebra}
(briefly, {\em ldz-algebra}; abbreviated as LDZA) if for every $x\in X$ there exists $a\in A$ such that $x\in s_A(a)\sbe X$.
\end{defi}

The following assertion is obvious.

\begin{fact}\label{ldzalgf}
 A dz-algebra $(A,X)$ is an ldz-algebra \tiff $X$ is
 an open subset of\/ $\SSS (A)$.
\end{fact}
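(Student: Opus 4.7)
The statement is called "obvious" in the text, and indeed the proof is essentially a one-line application of the fact recalled in \ref{nistone} that $\{s_A(a) \st a \in A\}$ is an open base of the topology $\TT_A$ on $\SSS(A)$. My plan is simply to unfold both sides of the biconditional against this basis characterization.

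For the forward direction, I assume $(A,X)$ is an ldz-algebra. Let $x \in X$. By Definition \ref{ldzalgn}, there exists $a \in A$ with $x \in s_A(a) \sbe X$. Since $s_A(a)$ is open in $\SSS(A)$, this exhibits an open neighborhood of $x$ contained in $X$; as $x$ was arbitrary, $X$ is open in $\SSS(A)$.

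For the converse, I assume $(A,X)$ is a dz-algebra with $X$ open in $\SSS(A)$. Fix $x \in X$. Since $\{s_A(a) \st a \in A\}$ is an open base of $\SSS(A)$ and $X$ is an open neighborhood of $x$, there exists $a \in A$ with $x \in s_A(a) \sbe X$, which is exactly the condition of Definition \ref{ldzalgn}.

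There is no real obstacle here; the only thing to be careful about is that the condition in Definition \ref{ldzalgn} requires the inclusion $s_A(a) \sbe X$ to hold in $\SSS(A)$ (not just in $X$), but this is precisely what the basis property delivers. The proof takes only a few lines and may be presented without any case analysis.
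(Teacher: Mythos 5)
Your proof is correct and is exactly the intended argument: the paper states the fact without proof (``The following assertion is obvious''), and the obviousness rests precisely on the observation you use, namely that $\{s_A(a)\st a\in A\}$ is an open base of $\SSS(A)$, so the condition in Definition \ref{ldzalgn} is a literal restatement of openness of $X$. Nothing to add.
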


Let $\LDZA$ be the full subcategory of the category $\DZA$ having as objects all ldz-algebras.

\begin{theorem}\label{genstonecnew1}
The categories\/ $\ZHLC$ and\/ $\LDZA$ are  dually equivalent.
\end{theorem}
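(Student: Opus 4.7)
The plan is to show that the contravariant functors $F:\ZH\lra\DZA$ and $G:\DZA\lra\ZH$ of Theorem \ref{zduality} restrict to a dual equivalence between the full subcategories $\ZHLC\sbe\ZH$ and $\LDZA\sbe\DZA$. Fullness of both inclusions means that once we check the two object-inclusions $F(|\ZHLC|)\sbe|\LDZA|$ and $G(|\LDZA|)\sbe|\ZHLC|$, every other piece of the dual equivalence (Hom-bijections, functoriality, the natural isomorphisms $F\circ G\cong\Id$ and $G\circ F\cong\Id$) automatically restricts.

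The inclusion $G(|\LDZA|)\sbe|\ZHLC|$ is immediate. Let $(A,X)\in|\LDZA|$. By Fact \ref{ldzalgf}, $X$ is an open subspace of the Stone space $\SSS(A)$; any open subspace of a compact Hausdorff zero-dimensional space is locally compact, Hausdorff and zero-dimensional, so $G(A,X)=X\in|\ZHLC|$.

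For $F(|\ZHLC|)\sbe|\LDZA|$, fix $X\in|\ZHLC|$; by Fact \ref{ldzalgf} it suffices to show that $\hat{X}$ is open in $\SSS(\CO(X))$. Given $x\in X$, use local compactness and zero-dimensionality of $X$ to pick a compact clopen neighborhood $K$ of $x$. Since $\CO(X)$ is a separating base, the canonical map $\hat{h}_X:X\lra\hat{X}$ is a homeomorphism, so $\hat{K}:=\hat{h}_X(K)$ is compact and hence closed in the Hausdorff space $\SSS(\CO(X))$. By Fact \ref{zalgf}, $\hat{X}$ is dense in $\SSS(\CO(X))$. The set $s_{\CO(X)}(K)$ is clopen and satisfies $\hat{X}\cap s_{\CO(X)}(K)=\hat{K}$. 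Combining density with the identity $\cl(M\cap U)=\cl(M)\cap U$ valid for clopen $U$ yields
\[
s_{\CO(X)}(K)=\cl(\hat{X})\cap s_{\CO(X)}(K)=\cl(\hat{X}\cap s_{\CO(X)}(K))=\cl(\hat{K})=\hat{K}\sbe\hat{X}.
\]
Since $\hat{x}\in s_{\CO(X)}(K)$, the point $\hat{x}$ has an open neighborhood contained in $\hat{X}$; hence $\hat{X}$ is open.

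The main obstacle is precisely this equivalence ``$X$ locally compact $\Longleftrightarrow\hat{X}$ open in $\SSS(\CO(X))$''. Zero-dimensionality is what makes the proof clean: a compact clopen neighborhood $K$ of $x$ has an image $\hat{K}$ which is simultaneously closed (by compactness) and identifiable with the clopen $s_{\CO(X)}(K)$ via the density identity, forcing $s_{\CO(X)}(K)\sbe\hat{X}$. No new functorial work is then needed, since the dual equivalence between $\ZH$ and $\DZA$ already encodes all the morphism data.
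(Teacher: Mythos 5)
Your proposal is correct and follows the same overall strategy as the paper: both proofs establish the duality by checking that the contravariant functors $F$ and $G$ of Theorem \ref{zduality} restrict to the full subcategories $\ZHLC$ and $\LDZA$, so that only the two object-level inclusions need verification, and both treat $G(|\LDZA|)\sbe|\ZHLC|$ by the identical observation that an open subspace of a Stone space is a Boolean space. The single point of divergence is the key step $F(|\ZHLC|)\sbe|\LDZA|$, i.e.\ the openness of $\hat{X}$ in $\SSS(\co(X))$: the paper deduces it from the general theorem (Engelking, Theorems 3.3.8 and 3.3.9) that a locally compact dense subspace of a Hausdorff space is open, whereas you give a self-contained elementary argument that exploits zero-dimensionality, covering $\hat X$ by sets of the form $s_{\co(X)}(K)$ for $K$ a compact clopen neighborhood and using the density identity $\cl(\hat X\cap U)=\cl(\hat X)\cap U$ for clopen $U$ together with closedness of the compact set $\hat K$ to get $s_{\co(X)}(K)=\hat K\sbe\hat X$. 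Your computation is sound (and incidentally re-proves, in the zero-dimensional setting, exactly the Engelking fact the paper cites); what it buys is independence from the external reference, at the cost of invoking the standard fact that Boolean spaces have a base of compact clopen sets, which you should at least flag as the place where local compactness and zero-dimensionality are combined.
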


\begin{proof}
Let $X\in|\ZHLC|$. Then, by Theorem \ref{zduality}, $X$ is homeomorphic to the space $G(F(X))=\hat{X}$. Therefore, $\hat{X}$ is a locally compact dense subset of the space $\SSS(\co(X))$. Then \cite[Theorems 3.3.9 and 3.3.8]{E} imply that $\hat{X}$  is an open subset of $\SSS(\co(X))$. Hence, $F(X)=(\co(X),\hat{X})$ is an ldz-algebra. Thus, $F(\ZHLC)\sbe\LDZA$.

Let $(A,X)$ be an ldz-algebra. Then $X$ is an open subspace of the Stone space $\SSS(A)$. Thus, $G(A,X)=X$ is a Boolean space, i.e., $G(\LDZA)\sbe \ZHLC$.
Now, using Theorem \ref{zduality}, we complete the proof of our assertion.
\end{proof}

Now it becomes clear that for proving Theorem \ref{genstonecnew} it is enough to show that the categories\/ $\LDZA$ and\/ $\ZLBA$ are isomorphic.

\begin{theorem}\label{genstonecnew2}
The categories\/ $\LDZA$ and\/ $\ZLBA$ are isomorphic.
\end{theorem}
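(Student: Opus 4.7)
The plan is to construct mutually inverse functors $F:\LDZA\to\ZLBA$ and $G:\ZLBA\to\LDZA$ that are the identity on the Boolean-algebra component and convert the subset $X\sbe\SSS(A)$ into an ideal $I\sbe A$ (and back) via Stone's order-isomorphism $\iota:Idl(A)\to\TT_A$ from Theorem \ref{opensetidlstd}.

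On objects, I would define $F(A,X)\df(A,I_X)$ where $I_X\df\iota\inv(X)=\{a\in A\st s_A(a)\sbe X\}$, and $G(A,I)\df(A,L_I^A)$. Since $L_I^A=\bigcup_{a\in I}s_A(a)=\iota(I)$, the bijection $\iota$ gives $F\circ G=\Id$ and $G\circ F=\Id$ on objects. To see $F$ lands in $\ZLBA$: the density of $X$ in $\SSS(A)$ together with the openness of $X$ (Fact \ref{ldzalgf}) yields, for any $a\in A^+$, a point $x\in s_A(a)\cap X$ and a $b\in A$ with $x\in s_A(b)\sbe X$, so $a\we b\in I_X^+$ and $a\we b\le a$, proving $I_X$ is a dense ideal; the ZLBA condition, via Proposition \ref{cox}, reduces to $L_{I_X}^A\cap s_A(A)=\CO(L_{I_X}^A)$, i.e.\ $s_A^X(A)=\CO(X)$, which is the dz-condition. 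Conversely, $L_I^A$ is open (union of basic opens) and dense (using the density of $I$) in $\SSS(A)$, and the same Proposition \ref{cox} delivers the dz-condition for $G(A,I)$.

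On morphisms, $F$ sends $(\p,f)\mapsto\p$, and $G$ sends $\p\mapsto(\p,f_\p)$ where $f_\p(x\ap)\df x\ap\circ\p$. The easy direction is that (LBA) for $\p$ implies $f_\p$ lands in $L_I^A$: if $x\ap\in L_{I\ap}^{A\ap}$ with $x\ap(b)=1$ for some $b\in I\ap$, pick $a\in I$ with $b\le\p(a)$ to get $(x\ap\circ\p)(a)=1$. The principal obstacle is the reverse direction: given an $\LDZA$-morphism $(\p,f):(A,X)\to(A\ap,X\ap)$ and $b\in I_{X\ap}$, produce a single $a\in I_X$ with $b\le\p(a)$. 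The key idea is a compactness argument: $s_{A\ap}(b)$ is clopen, hence compact, in $\SSS(A\ap)$; for every $x\ap\in s_{A\ap}(b)\sbe X\ap$, the point $x\ap\circ\p=f(x\ap)$ lies in the open set $X=L_{I_X}^A$, so it belongs to some basic open $s_A(a_{x\ap})$ with $a_{x\ap}\in I_X$, and then $x\ap\in s_{A\ap}(\p(a_{x\ap}))$. Extract a finite subcover $s_{A\ap}(b)\sbe\bigcup_{i=1}^n s_{A\ap}(\p(a_i))=s_{A\ap}(\p(a_1\vee\cdots\vee a_n))$; the element $a\df a_1\vee\cdots\vee a_n$ lies in the ideal $I_X$ and satisfies $b\le\p(a)$.

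Finally, $F$ and $G$ preserve composition and identities (the reverse order of the second coordinates in the $\DZA$-composition law $(\p\ap,f\ap)\circ(\p,f)=(\p\ap\circ\p,f\circ f\ap)$ disappears once the $f$-component is dropped), and the two assignments on morphisms are visibly mutual inverses, since every $\LDZA$-morphism $(\p,f)$ satisfies $f(x\ap)=x\ap\circ\p=f_\p(x\ap)$ by the compatibility condition defining $\DZA$-morphisms. This establishes the required isomorphism of categories, completing the proof.
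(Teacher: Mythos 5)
Your proof is correct, and while the object-level constructions coincide with the paper's ($E(A,X)=(A,I_X)$ with $I_X=\{a\st s_A(a)\sbe X\}$, and $E\ap(A,I)=(A,X_I)$ with $X_I=L_I^A$), you diverge from the paper's proof in two genuinely different ways, both to your advantage in economy. First, for the identities $E\circ E\ap=\Id$ and $E\ap\circ E=\Id$ on objects you simply observe that $I_X=\iota\inv(X)$ and $L_I^A=\iota(I)$ and invoke the bijectivity of Stone's correspondence $\iota$ from Theorem \ref{opensetidlstd}; the paper instead proves the nontrivial inclusion $I_{X_I}\sbe I$ by hand, covering the compact set $s_A(a)$ by sets $s_A(b_x)$ with $b_x\in I$ and extracting a finite subcover. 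Second, and more substantially, for the verification that an $\LDZA$-morphism $(\p,f)$ yields a map $\p$ satisfying condition (LBA), the paper argues by contradiction through a prime-ideal separation: it forms the filter $\uparrow\!(b)$, assumes it misses $\p(I)$, applies the Maximal Ideal Theorem to produce a prime ideal and hence a point $x\ap\in s_B(b)$ whose image under $f$ cannot lie in $X$, a contradiction. Your argument replaces this with a direct finite-subcover argument on the compact clopen set $s_{A\ap}(b)$, using that each $f(x\ap)=x\ap\circ\p$ lies in some $s_A(a_{x\ap})\sbe X$ with $a_{x\ap}\in I_X$ and that $s_{A\ap}$ is order-reflecting; this is shorter, constructive in flavour (modulo the compactness of the Stone space), and mirrors the very technique the paper itself uses elsewhere for $I_{X_I}\sbe I$. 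The remaining steps (density of $I_X$, the reduction of the ZLBA condition to the dz-condition via Proposition \ref{cox} and the identification $L_{I_X}^A=X$, the well-definedness of $f_\p$ from (LBA), and the observation that the $f$-component of an $\LDZA$-morphism is determined by $\p$) agree with the paper's.
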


\begin{proof}
We will define two functors $$E:\LDZA\lra\ZLBA\ \ \mbox{ and }\ \ E\ap:\ZLBA\lra\LDZA,$$ and will show that their compositions are equal to the corresponding identity functors.

Let $(A,X)\in|\LDZA|$. Then $X$ is an open subset of the space $\SSS(A)$. Set
$$I_X\df\{a\in A\st s_A(a)\sbe X\}.$$
We will show that $(A,I_X)\in|\ZLBA|$. Indeed, by Theorem \ref{opensetidlstd}, $I_X$ is an ideal of $A$ (proper or non proper). We have to prove that $I_X$ is a dense subset of $A$. Let $b\in A^+$. Then $s_A(b)\nes$. By Fact \ref{zalgf}, $X$ is a dense subset of $\SSS(A)$. Hence $X\cap s_A(b)$ is an open non-empty subset of $\SSS(A)$. Then there exists $a\in A^+$ such that $s_A(a)\sbe X\cap s_A(b)$. Thus $a\in I_X$ and $a\le b$. So, $I_X$ is a dense ideal of $A$ and, hence, $(A,I_X)$ is an LBA. For proving that it is a ZLBA, we will use Proposition \ref{cox} (and its notation). We will first show that $L_{I_X}^A=X$. Indeed, let $x\in X$. Then there exists $a\in A$ such that $x\in s_A(a)\sbe X$. Hence $a\in I_X$ and $x(a)=1$. Thus, $X\sbe  L_{I_X}^A$. Conversely, let $x\in L_{I_X}^A$. Then there exists $a\in I_X$ such that $x(a)=1$. Thus $x\in s_A(a)\sbe X$. Therefore, $x\in X$. So, $$L_{I_X}^A=X.$$ Since $(A,X)$ is a DZA, we have that $X\cap s_A(A)=\CO(X)$. Hence, $L_{I_X}^A\cap s_A(A)=\CO(L_{I_X}^A)$. Then Proposition \ref{cox} shows that $(A,I_X)$ is a ZLBA.
We set now: $$E(A,X)\df (A,I_X).$$

Further, let $(\p,f)\in\LDZA((A,X),(B,X\ap))$. Then $\p\in\Bool(A,B)$, $f\in\Set(X\ap,X)$ and $f(x\ap)=x\ap\circ\p$, for every $x\ap\in X\ap$. We will show that
$\p:(A,I_X)\lra (B,I_{X\ap})$ is a $\ZLBA$-morphism.  Set, for short, $I\df I_X$ and $J\df I_{X\ap}$. We need only to prove that $\p$ satisfies condition (LBA), i.e., we have to show that for every $b\in J$ there exists $a\in I$ such that $b\le\p(a)$. Let $b\in J$. Then $s_B(b)\sbe X\ap$ and thus, $f(s_B(b))\sbe X$. Set
$\Phi\df\uparrow\!\!(b)(=\{c\in B\st b\le c\})$. Then $\Phi$ is a filter in $B$. Suppose that $\Phi\cap \p(I)=\ems$. Let $I_0$ be the ideal in $B$ generated by $\p(I)$. We will prove that $\Phi\cap I_0=\ems$. Let $c\in I_0$. Then $c\le b_1\vee\ldots\vee b_n$ for some $n\in \NNNN^+$ and some $b_1,\ldots,b_n\in\p(I)$. For every $i=1,\ldots,n$, there exists $a_i\in I$ such that $b_i=\p(a_i)$. Setting $a_0\df\bigvee_{i=1}^n a_i$, we obtain that $a_0\in I$ and $b_1\vee\ldots\vee b_n=\p(a_0)$. Thus, $c\le\p(a_0)$. Suppose that $c\in\Phi$. Then $\p(a_0)\in\Phi$, a contradiction. Therefore, $\Phi\cap I_0=\ems$. Then, by the Maximal Ideal Theorem \cite{ST} (see also \cite[Lemma 2.3]{J}), there exists an ideal $I\ap$ of $B$ which is maximal amongst those containing $I_0$
and disjoint from $\Phi$. Clearly, $I\ap$ is an ideal of $B$ which is maximal amongst those disjoint from $\Phi$. Then, as it is well known (see, e.g., \cite[Theorem 2.4]{J}), $I\ap$ is a prime ideal of $B$. Thus there exists a homomorphism $x\ap\in\Bool(B,\2)$ such that $I\ap=(x\ap)\inv(0)$ (see, e.g., \cite[Proposition 2.2]{J}). Hence $x\ap(b)=1$, i.e., $x\ap\in s_B(b)$ and therefore $x\ap\in X\ap$. Then $f(x\ap)=x\ap\circ\p\in X$. Since $X$ is an open subset of $\SSS(A)$, there exists $a\in A$ such that $x\ap\circ\p\in s_A(a)\sbe X$. Then $a\in I$ and thus $\p(a)\in\p(I)\sbe I_0\sbe I\ap$. Hence, $x\ap(\p(a))=0$. Since $x\ap\circ\p\in s_A(a)$, we obtain a contradiction. Therefore, $\p(I)\cap \Phi\nes$. This means that there exists $a\in I$ such that $\p(a)\ge b$. So, $\p$ is a $\ZLBA$-morphism. Now we set
$$E(\p,f)\df \p.$$
It is easy to see that $E$ is a functor.

Let now $(A,I)\in|\ZLBA|$. We set $X_I\df\bigcup\{s_A(a)\st a\in I\}$. Then, clearly, $X_I$ is an open subset of $\SSS(A)$. We will show that $(A,X_I)\in|\LDZA|$. Let $b\in A^+$. Since $I$ is dense in $A$, there exists $a\in I^+$ such that $a\le b$. Then $\ems\neq s_A(a)\sbe X_I\cap s_A(b)$. Thus $X_I$ is a dense subset of $\SSS(A)$. Now, Fact \ref{zalgf} implies that $(A,X_I)$ is a z-algebra. For proving that it is a dz-algebra, we will first show that $X_I=L_I^A$ (see Notation \ref{notalia}). Let $x\in X_I$. Then there exists $a\in I$ such that $x\in s_A(a)$. Thus $x(a)=1$ and, hence, $x\in L_I^A$. So, $X_I\sbe L_I^A$.
Conversely,  if $x\in L_I^A$, then there exists $a\in I$ with $x(a)=1$. Thus $x\in s_A(a)$ and, hence, $x\in X_I$. Therefore, $$X_I=L_I^A.$$ Since, by Proposition \ref{cox}, we have that $L_I^A\cap s_A(A)=\CO(L_I^A)$, we obtain that $X_I\cap s_A(A)=\CO(X_I)$. So, $(A,X_I)\in|\LDZA|$. We set
$$E\ap(A,I)\df (A,X_I).$$

Let $\p\in\ZLBA((A,I),(B,J))$ and $x\ap\in X_J$. Then there exists $b\in J$ with $x\ap(b)=1$. Since $\p$ satisfies condition (LBA), there exists $a\in I$ such that $b\le\p(a)$. Then $1=x\ap(b)\le x\ap(\p(a))$. Hence, $x\ap(\p(a))=1$. This means that $x\ap\circ \p\in X_I$. So, the function $$f_\p:X_J\lra X_I, \ \ x\ap\mapsto x\ap\circ \p,$$ is well-defined. Thus $(\p,f_\p)\in\LDZA(E\ap(A,I),E\ap(B,J))$ and we set
$$E\ap(\p)\df(\p,f_\p).$$
It is easy to see that $E\ap$ is a functor.

We will show that $E\ap\circ E=Id_{\LDZA}$. Let $(A,X)\in|\LDZA|$. Then $E(A,X)=(A,I_X)$, where $I_X=\{a\in A\st s_A(a)\sbe X\}$, and $E\ap(E(A,X))=(A,X_{I_X})$, where $X_{I_X}=\bigcup\{s_A(a)\st a\in I_X\}$. Clearly, $X_{I_X}\sbe X$. Conversely, let $x\in X$. Then there exists $a\in A$ such that $x\in s_A(a)\sbe X$. Thus $a\in I_X$ and, hence, $s_A(a)\sbe X_{I_X}$. This shows that $x\in X_{I_X}$. Therefore, $$X=X_{I_X}.$$ So, $E\ap(E(A,X))=(A,X)$. Further, let $(\p,f)\in\LDZA((A,X),(B,X\ap))$. Then $E\ap(E(\p,f))=E\ap(\p)=(\p,f_\p)$, where $f\in\Set(X\ap,X)$, $f(x\ap)=x\ap\circ\p$, for every $x\ap\in X\ap$, $f_\p\in\Set(X_{I_{X\ap}},X_{I_X})$ and $f_\p(x\ap)=x\ap\circ\p$, for every $x\ap\in X_{I_{X\ap}}$. Since $X_{I_{X\ap}}=X\ap$ and $X_{I_X}=X$ (as we have shown above), we obtain that $f=f_\p$. Therefore, $E\ap\circ E=Id_{\LDZA}$.

Finally, we will show that $E\circ E\ap=Id_{\ZLBA}$. Let  $(A,I)\in|\ZLBA|$. Then $E(E\ap(A,I))=E(A,X_I)=(A,I_{X_I})$, where $X_I=\bigcup\{s_A(a)\st a\in I\}$ and $I_{X_I}=\{a\in A\st s_A(a)\sbe X_I\}$. If $a\in I$, then $s_A(a)\sbe X_I$ and thus $a\in I_{X_I}$. Hence, $I\sbe I_{X_I}$. For proving that $I_{X_I}\sbe I$, let $a\in I_{X_I}$. Then $s_A(a)\sbe X_I$. We  have already proved that $X_I=L_I^A=\{x\in\Bool(A,\2)\st 1\in x(I)\}$. Thus, for every $x\in s_A(a)$ there exists $b_x\in I$ such that $x\in s_A(b_x)$. Therefore, $\{s_A(b_x)\cap s_A(a)\st x\in s_A(a)\}$ is an open cover of the compact space $s_A(a)$. Hence, there exist $n\in\NNNN^+$ and $x_1,\ldots,x_n\in s_A(a)$ such that $s_A(a)=s_A(a)\cap\bigcup_{i=1}^n s_A(b_{x_i})=s_A(a\we\bigvee_{i=1}^n b_{x_i})$. Setting $b\df \bigvee_{i=1}^n b_{x_i}$, we obtain that $b\in I$ and $s_A(a)=s_A(a\we b)$. Thus $a=a\we b$, i.e., $a\le b$. Since $b\in I$, we conclude that $a\in I$. So, $$I_{X_I}=I.$$ Thus $E(E\ap(A,I))=(A,I)$. Let now $\p\in\ZLBA((A,I),(B,J))$. Then  $E(E\ap(\p))=E(\p,f_\p)=\p$. Hence, $E\circ E\ap=Id_{\ZLBA}$.

All this shows that the categories\/ $\LDZA$ and\/ $\ZLBA$ are isomorphic.
\end{proof}

\begin{nist}\label{nistboolsp}
\rm Clearly, Theorems \ref{genstonecnew2} and \ref{genstonecnew1} imply Theorem \ref{genstonecnew}. We will show that even the equalities  $E\circ F_l=\TE_d^t$ and $G_l\circ E\ap=\TE_d^a$ take place, where $F_l:\ZHLC\lra \LDZA$ and $G_l:\LDZA\lra\ZHLC$ are the restrictions of the contravariant functors $F$ and $G$ from the proof of Theorem \ref{zduality}, respectively. Indeed, for every $(A,I)\in |\ZLBA|$, we have $G_l(E\ap(A,I))=G_l(A,X_I)=X_I$ and $\TE_d^a(A,I)=L_I^A$. Since $X_I=L_I^A$ (see the proof of Theorem \ref{genstonecnew2}), we obtain that $G_l(E\ap(A,I))=\TE_d^a(A,I)$. If $\p\in\ZLBA((A, I),(B, J))$, then $G_l(E\ap(\p))=G_l(\p,f_\p)=f_\p$, where
$f_\p:X_J\lra X_I, \ \ x\ap\mapsto x\ap\circ \p$. Also, $\TE_d^a(\p):\TE_d^a(B,J)\lra \TE_d^a(A,I)$ is defined by $\TE_d^a(\p)(x\ap)=x\ap\circ\p$ for every $x\ap\in L_J^B=X_J$. Therefore, $G_l(E\ap(\p))=\TE_d^a(\p)$. So, $G_l\circ E\ap=\TE_d^a$. Further, for every $X\in|\ZHLC|$, $E(F_l(X))=E(\co(X),\hat{X})=(\co(X),I_{\hat{X}})$ and $\TE_d^t(X)=(\co(X),\KO(X))$. We have that $\hat{X}=\{\hat{x}:\co(X)\lra\2\st x\in X\}$, where $\hat{x}(U)=1\Leftrightarrow x\in U$, for every $x\in X$ and every $U\in\co(X)$, and $I_{\hat{X}}=\{U\in\co(X)\st s_{\co(X)}(U)\sbe \hat{X}\}$. Also, $\hat{X}\cap s_{\co(X)}(U)=\{\hat{x}\in\hat{X}\st \hat{x}(U)=1\}=\{\hat{x}\in\hat{X}\st x\in U\}=\hat{U}$.  As it is shown in \cite[Example 3.9]{DD}, the map $\hat{h}_X: X\lra\hat{X},\ \ x\mapsto \hat{x}$, is a homeomorphism (regarding $\hat{X}$ as a subspace of $\SSS(\co(X))$). Therefore, $\hat{h}_X(\CO(X))=\CO(\hat{X})$ and for every $U\in\CO(X)$, $U$ is homeomorphic to $\hat{h}(U)=\hat{U}$. Thus, if $U\in I_{\hat{X}}$ then $\hat{U}=s_{\co(X)}(U)$ and, hence, $\hat{U}$ is compact, so that $U\in\KO(X)$. Therefore, $I_{\hat{X}}\sbe\KO(X)$. Conversely,
if $U\in\KO(X)$ then $\hat{U}$ is compact, i.e., $\hat{X}\cap s_{\co(X)}(U)$ is compact. Since $(\co(X),\hat{X})$ is a ZA (by \cite[Example 3.9]{DD}), $\hat{X}$ is a dense subset of $\SSS(\co(X))$. Thus, setting $Y\df \SSS(\co(X))$, we obtain that $\hat{U}=\cl_Y(\hat{U})=\cl_Y(\hat{X}\cap s_{\co(X)}(U))=\cl_Y(s_{\co(X)}(U))=s_{\co(X)}(U)$. Therefore, $s_{\co(X)}(U)\sbe \hat{X}$, i.e., $U\in I_{\hat{X}}$. Hence, $I_{\hat{X}}=\KO(X)$. Thus $(E\circ F_l)(X)=\TE_d^t(X)$. Finally, if $f\in\ZHLC(X,Z)$, then $E(F_l(f))=E(\co(f),\hat{f})=\co(f)=\TE_d^t(f)$. This shows that $E\circ F_l=\TE_d^t$.
\end{nist}

\section{One more extension of the Stone Duality Theorem to the category $\ZHLC$}

In this section we will introduce the notion of {\em lmz-map} and with its help we will obtain our second new duality theorem for the category $\ZHLC$, namely, Theorem \ref{genstonecnew111}.
For doing this, we have first to recall
 the following theorem from \cite{DD}:

\begin{theorem}\label{zeq}{\rm \cite{DD}}
The categories\/ $\MBool$ and\/ $\DZA$ are equivalent.
\end{theorem}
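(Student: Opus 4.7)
The plan is to regard Theorems \ref{zduality} and \ref{nzduality} as exhibiting $\DZA$ and $\MBool$ as two presentations of the opposite category of $\ZH$; composing the two dualities should then yield a covariant equivalence between them. I would define
$$\Phi \df \FFF \circ G : \DZA \lra \MBool \quad \text{and} \quad \Psi \df F \circ \GGG : \MBool \lra \DZA,$$
where $F, G$ are the dualising functors of Theorem \ref{zduality} and $\FFF, \GGG$ those of Theorem \ref{nzduality}. Since each is the composition of two contravariant functors, $\Phi$ and $\Psi$ are covariant.

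Unwinding the definitions, $\Phi$ sends a dz-algebra $(A, X)$ (with $X$ carrying the subspace topology from $\SSS(A)$) to the inclusion $\MBool$-object $i_X : \co(X) \lra \PPP(X)$, and sends a $\DZA$-morphism $(\p, f)$ to $(\co(f), \PPP(f))$. Dually, $\Psi$ sends an mz-map $\a : A \to B$ to the dz-algebra $(\co(X_\a), \widehat{X_\a})$, and a $\MBool$-morphism $(\p, \s) : \a \to \a'$ to $(\co(f_\s), \widehat{f_\s})$, with $f_\s$ as in the sketch of Theorem \ref{nzduality}. That the composites land in the correct categories is guaranteed by the functoriality statements already built into Theorems \ref{zduality} and \ref{nzduality}, and requires no further verification.

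To finish, I would paste together the unit and counit isomorphisms from the two dualities: using $\GGG \circ \FFF \cong Id_{\ZH}$ and $G \circ F \cong Id_{\ZH}$ in the middle slot of each composite, one obtains
$$\Psi \circ \Phi = F \circ \GGG \circ \FFF \circ G \cong F \circ G \cong Id_{\DZA}, \qquad \Phi \circ \Psi = \FFF \circ G \circ F \circ \GGG \cong \FFF \circ \GGG \cong Id_{\MBool}.$$
The argument is therefore essentially formal once the two previous dualities are in hand; the only step demanding any attention is the horizontal/whiskering bookkeeping needed to guarantee that the pasted families are natural transformations, which is the standard categorical fact that any two duals of a category are equivalent to one another. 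I do not expect any genuinely hard obstacle, since no new properties of mz-maps or dz-algebras are invoked beyond those already recorded in Theorems \ref{zduality} and \ref{nzduality}.
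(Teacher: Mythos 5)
Your argument is correct: composing the two dualities of Theorems \ref{zduality} and \ref{nzduality} yields covariant functors $\Phi=\FFF\circ G$ and $\Psi=F\circ\GGG$ whose composites are naturally isomorphic to the identity functors, and the whiskering step you flag is indeed unproblematic (whiskering a natural isomorphism by a contravariant functor transposes the naturality squares but still produces a natural isomorphism). The paper's proof is the same in spirit but not identical in detail. Its functor $G\ap$ coincides with your $\Psi$ exactly, but its functor $F\ap$ sends $(A,X)$ to the map $s_A^X:A\lra\PPP(X)$, $a\mapsto X\cap s_A(a)$, keeping the original algebra $A$ as the domain, whereas your $\Phi(A,X)=i_X:\co(X)\lra\PPP(X)$ replaces $A$ by its isomorphic image $\co(X)=s_A^X(A)$. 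The two are naturally isomorphic via $\bar{s}_A^X:A\lra\co(X)$ (an isomorphism precisely because $(A,X)$ is a dz-algebra), so both choices establish the equivalence; the paper's choice costs a small direct verification that $s_A^X$ is an mz-map (injectivity from the z-algebra condition, the atom condition from $X\sbe\Bool(A,\2)$, maximality from the dz condition), which your purely formal argument avoids, but it buys functors whose relationship to $F$, $G$, $\FFF$, $\GGG$ ($F\ap\circ F\cong\FFF$ and $G\circ G\ap\cong\GGG$) is exploited later, e.g.\ in the proof of Theorem \ref{genstonecnew111}. Either way, your proposal is a valid proof of the stated equivalence.
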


\noindent{\em Sketch of the proof.} We will first define two functors $F\ap:\DZA\lra\MBool$ and $G\ap:\MBool\lra\DZA$.

For every $(A,X)\in|\DZA|$, we set $$F\ap(A,X)\df s_A^X.$$ Then $s_A^X\in|\MBool|$.
If $(\p,f)\in\DZA((A,X),(A\ap,X\ap))$, we put $$F\ap(\p,f)\df
(\p,\PPP(f)).$$ Then $F\ap(\p,f)\in\MBool(F\ap(A,X),F\ap(A\ap,X\ap))$.

For every $(\a:A\lra B)\in |\MBool|$, we set
$$G\ap(\a)\df(\co (X_\a),\widehat{X_\a}),$$
where $X_\a$ is regarded as a subspace of $\SSS(A)$, $\widehat{X_\a}=\{\widehat{\a_x}:\co (X_\a)\lra\2\st \a_x\in X_\a\}$ and $\widehat{\a_x}(U)=1\Leftrightarrow \a_x\in U$, for every $U\in\co(X_\a)$. Then $G\ap(\a)\in |\DZA|$.
Let $(\p,\s)\in\MBool(\a,\a\ap)$, where $\a:A\lra B$ and $\a\ap:A\ap\lra B\ap$. We set
$$G\ap(\p,\s)\df (\co(f_\s),\widehat{f_\s}),$$
where $f_\s:X_{\a\ap}\lra X_\a$ is defined by $\a\ap_{x\ap}\mapsto \a_{\Att(\s)(x\ap)}$ and $\widehat{f_\s}:\widehat{X_{\a\ap}}\lra\widehat{X_\a}$ is defined by $\widehat{\a\ap_{x\ap}}\mapsto\widehat{f_\s(\a\ap_{x\ap})}$. Then $G\ap(\p,\s)\in \DZA(G\ap(\a),G\ap(\a\ap))$.

It is easy to see
that $F\ap$ and $G\ap$  are functors. Finally, the functors $F\ap\circ G\ap$ and $G\ap\circ F\ap$ are
naturally isomorphic to the corresponding identity functors.
\sqs

\begin{defi}\label{lmzhomo}
\rm  An mz-map $\a:A\lra B$ is called an  {\em lmz-map}\/ if for every $x\in\At(B)$ there exists $a\in A$ such that $\a_x\in s_A(a)\sbe X_\a$.
\end{defi}

\begin{fact}\label{lmzhomof}
An mz-map $\a:A\lra B$ is an lmz-map iff $X_\a$ is an open subset of $\SSS(A)$.
\end{fact}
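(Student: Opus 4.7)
The plan is to observe that Fact~\ref{lmzhomof} is essentially a restatement of the definition of lmz-map in topological language, so the proof should be a one-paragraph verification that directly invokes the base of the Stone topology.

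First I recall from \ref{nistone} that the family $\{s_A(a)\st a\in A\}$ is an open base of $\SSS(A)=(X_A,\TT_A)$, and from \ref{hats} that $X_\a=\{\a_x\st x\in\At(B)\}\sbe X_A$ is considered as a subspace of $\SSS(A)$. Therefore, by definition of a base, a subset $U\sbe X_A$ is open in $\SSS(A)$ iff for every $y\in U$ there exists $a\in A$ with $y\in s_A(a)\sbe U$.

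For the forward implication, assume $\a$ is an lmz-map. Let $y\in X_\a$; then $y=\a_x$ for some $x\in\At(B)$, and by Definition~\ref{lmzhomo} there exists $a\in A$ with $y=\a_x\in s_A(a)\sbe X_\a$. By the base characterization recalled above, $X_\a$ is open in $\SSS(A)$.

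For the reverse implication, assume $X_\a$ is open in $\SSS(A)$. Let $x\in\At(B)$; then $\a_x\in X_\a$, and since $\{s_A(a)\st a\in A\}$ is a base of the topology of $\SSS(A)$, there exists $a\in A$ with $\a_x\in s_A(a)\sbe X_\a$. Thus $\a$ satisfies the condition of Definition~\ref{lmzhomo} and is therefore an lmz-map. There is no real obstacle here; the only thing to keep in mind is the convention that $X_\a$ is topologized as a subspace of $\SSS(A)$, so that openness in $\SSS(A)$ and in $X_\a$ (as needed in the two directions) must be correctly aligned --- but since $X_\a\sbe X_\a$ trivially, the neighborhoods $s_A(a)$ produced in either direction lie inside $X_\a$, so no subspace/ambient-space subtlety intervenes.
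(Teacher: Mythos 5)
Your proof is correct and is exactly the routine verification the paper has in mind: the paper states Fact~\ref{lmzhomof} without proof because, as you observe, it is an immediate translation of Definition~\ref{lmzhomo} via the fact that $\{s_A(a)\st a\in A\}$ is an open base of $\SSS(A)$ and that every point of $X_\a$ has the form $\a_x$ for some $x\in\At(B)$. Nothing further is needed.
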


Let $\LMBool$ be the full subcategory of the category $\MBool$ having as objects all lmz-maps.

\begin{theorem}\label{genstonecnew11}
The categories\/ $\LDZA$ and\/ $\LMBool$ are equivalent.
\end{theorem}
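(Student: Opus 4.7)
The plan is to derive Theorem \ref{genstonecnew11} by restricting the equivalence $F':\DZA\lra\MBool$, $G':\MBool\lra\DZA$ of Theorem \ref{zeq}. Since $\LDZA$ is a full subcategory of $\DZA$ and $\LMBool$ is a full subcategory of $\MBool$, it is enough to verify the two object-level conditions
\begin{equation*}
(A,X)\in|\LDZA|\;\Longrightarrow\;F'(A,X)\in|\LMBool|,\qquad \a\in|\LMBool|\;\Longrightarrow\;G'(\a)\in|\LDZA|.
\end{equation*}
Once these are established, the natural isomorphisms $F'\circ G'\cong\Id_{\MBool}$ and $G'\circ F'\cong\Id_{\DZA}$ supplied by Theorem \ref{zeq} restrict to natural isomorphisms on the full subcategories, which gives the desired equivalence.

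For the first implication, let $(A,X)\in|\LDZA|$. By Theorem \ref{zeq}, $F'(A,X)=s_A^X:A\lra\PPP(X)$ is an mz-map, so, by Fact \ref{lmzhomof}, I only need to check that $X_{s_A^X}$ is open in $\SSS(A)$. The atoms of $\PPP(X)$ are the singletons $\{x_0\}$ with $x_0\in X$, and unfolding the definition from \ref{hats} gives
\begin{equation*}
(s_A^X)_{\{x_0\}}(a)=1\;\Leftrightarrow\;\{x_0\}\sbe s_A^X(a)=X\cap s_A(a)\;\Leftrightarrow\;x_0(a)=1,
\end{equation*}
so $(s_A^X)_{\{x_0\}}=x_0$, whence $X_{s_A^X}=X$. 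Since $(A,X)$ is an ldz-algebra, Fact \ref{ldzalgf} tells us that $X$ is open in $\SSS(A)$, and therefore $s_A^X$ is an lmz-map.

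For the second implication, let $\a:A\lra B$ be an lmz-map. Then $G'(\a)=(\co(X_\a),\widehat{X_\a})$ is already a dz-algebra by Theorem \ref{zeq}, so, by Fact \ref{ldzalgf}, it remains to show that $\widehat{X_\a}$ is open in $\SSS(\co(X_\a))$. By Fact \ref{lmzhomof}, $X_\a$ is an open subset of $\SSS(A)$; as an open subspace of a Stone space, $X_\a$ is zero-dimensional, locally compact and Hausdorff, i.e.\ $X_\a\in|\ZHLC|$. Now I can repeat verbatim the argument used in the proof of Theorem \ref{genstonecnew1}: the map $\hat{h}_{X_\a}:X_\a\lra\widehat{X_\a}$ is a homeomorphism (and $\widehat{X_\a}$ is dense in $\SSS(\co(X_\a))$ by Fact \ref{zalgf}), so $\widehat{X_\a}$ is a locally compact dense subspace of the Stone space $\SSS(\co(X_\a))$, hence open in it by \cite[Theorems 3.3.9 and 3.3.8]{E}. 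Consequently $G'(\a)\in|\LDZA|$.

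No serious obstacle is expected: the only content beyond Theorem \ref{zeq} is the bookkeeping identity $X_{s_A^X}=X$ and the transfer of openness from $X_\a$ inside $\SSS(A)$ to $\widehat{X_\a}$ inside $\SSS(\co(X_\a))$, the latter being exactly the compactness-plus-density argument already used in the proof of Theorem \ref{genstonecnew1}.
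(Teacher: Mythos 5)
Your proposal is correct, and its overall architecture is the same as the paper's: restrict the equivalence $F\ap,G\ap$ of Theorem \ref{zeq} to the two full subcategories and verify only the two object-level conditions, the fullness taking care of morphisms and of the restriction of the natural isomorphisms. The first condition is settled essentially as in the paper (the paper cites the identification $X_{s_A^X}\equiv X$ from \cite{DD}, which you instead verify by the direct computation $(s_A^X)_{\{x_0\}}=x_0$). Where you genuinely diverge is the second condition, the openness of $\widehat{X_\a}$ in $\SSS(\co(X_\a))$ for an lmz-map $\a$: the paper takes the Boolean isomorphism $\bar{s}_A^{X_\a}:A\lra\co(X_\a)$, applies $\SSS$ to get a homeomorphism $\SSS(\co(X_\a))\lra\SSS(A)$ of the ambient Stone spaces, and checks pointwise that it carries $\widehat{X_\a}$ onto $X_\a$, so that openness is transported along a global homeomorphism; you instead note that $X_\a$, being open in a Stone space, lies in $|\ZHLC|$, that $\hat{h}_{X_\a}$ is a homeomorphism onto $\widehat{X_\a}$, and that a locally compact dense subspace of a compact Hausdorff space is open by \cite[Theorems 3.3.9 and 3.3.8]{E}, exactly as in the proof of Theorem \ref{genstonecnew1}. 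Both arguments are sound; the paper's stays entirely inside the Stone-duality bookkeeping and produces the explicit identification $f(\widehat{X_\a})=X_\a$ as a by-product, while yours is shorter, avoids introducing the auxiliary homeomorphism, and makes the symmetry with the topological half of Theorem \ref{genstonecnew1} transparent.
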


\begin{proof} We will show that the restriction $F\ap_l$ of the functor $F\ap$ (defined in the proof of Theorem \ref{zeq}) to the category $\LDZA$  is in fact a functor between the categories\/  $\LDZA$ and\/ $\LMBool$.
Let $(A,X)\in|\LDZA|$. Then $F\ap(A,X)= s_A^X$. Set $\a\df s_A^X$. Then $\a:A\lra \PPP(X)$ and, as it is easy to show (see also \cite[page 17]{DD}), $X_\a\equiv X$. Since, by the definition of an ldz-algebra, $X$ is an open subset of $\SSS(A)$, we obtain that $X_\a$ is an open subset of $\SSS(A)$. Thus, $F\ap(A,X)\in|\LMBool|$.

Now we will show that the restriction $G\ap_l$ of the functor $G\ap$ (defined in the proof of Theorem \ref{zeq}) to the category $\LMBool$  is in fact a functor between the categories\/ $\LMBool$ and\/ $\LDZA$.
Let $(\a:A\lra B)\in |\LMBool|$. Then
$G\ap(\a)=(\co (X_\a),\widehat{X_\a})$. Set $A\ap\df\co(X_\a)$. We need only to prove that $\widehat{X_\a}$ is an open subset of $\SSS(A\ap)$. We have that $X_\a$ is an open subset of $\SSS(A)$. As it is shown in \cite[page 16]{DD}, the restriction $\bar{s}_A^{X_\a}:A\lra\co (X_\a)=A\ap$ of the map $s_A^{X_\a}:A\lra\PPP(X_\a)$ is a Boolean isomorphism. Then, by the Stone Duality Theorem, the map $f\df\SSS(\bar{s}_A^{X_\a}):\SSS(A\ap)\lra\SSS(A)$ is a homeomorphism. We will prove that $f(\widehat{X_\a})=X_\a$. Clearly, this will imply that $\widehat{X_\a}$ is an open subset of $\SSS(A\ap)$. Let $\widehat{\a_x}\in\widehat{X_\a}$. Then $x\in\At(B)$ and $f(\widehat{\a_x})=\widehat{\a_x}\circ\bar{s}_A^{X_\a}$. Thus, for every $a\in A$, $f(\widehat{\a_x})(a)=1\Leftrightarrow \widehat{\a_x}(\bar{s}_A^{X_\a}(a))=1\Leftrightarrow \a_x\in \bar{s}_A^{X_\a}(a)\Leftrightarrow \a_x(a)=1$. Hence, $f(\widehat{\a_x})=\a_x$. This shows that $f(\widehat{X_\a})=X_\a$. Therefore, $G\ap(\a)\in|\LDZA|$.

So, $F\ap_l:\LDZA\lra\LMBool$ and $G\ap_l:\LMBool\lra\LDZA$  are restrictions of the functors $F\ap:\DZA\lra\MBool$ and $G\ap:\MBool\lra\DZA$, respectively. Now, our assertion follows from Theorem \ref{zeq}.
\end{proof}

\begin{theorem}\label{genstonecnew111}
The categories\/ $\ZHLC$ and\/ $\LMBool$ are dually equivalent.
\end{theorem}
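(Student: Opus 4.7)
The statement is an immediate consequence of the two equivalences already established in this section, so the plan is to simply compose them. Specifically, Theorem~\ref{genstonecnew1} gives a dual equivalence between $\ZHLC$ and $\LDZA$, realized by the restrictions $F_l:\ZHLC\lra\LDZA$ and $G_l:\LDZA\lra\ZHLC$ of the contravariant functors $F$ and $G$ from Theorem~\ref{zduality}. Theorem~\ref{genstonecnew11} gives a (covariant) equivalence between $\LDZA$ and $\LMBool$, realized by the restrictions $F\ap_l:\LDZA\lra\LMBool$ and $G\ap_l:\LMBool\lra\LDZA$ of the functors $F\ap$ and $G\ap$ from Theorem~\ref{zeq}. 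I would then define contravariant functors
\begin{equation*}
\PHI\df F\ap_l\circ F_l:\ZHLC\lra\LMBool \qquad\text{and}\qquad \PSI\df G_l\circ G\ap_l:\LMBool\lra\ZHLC,
\end{equation*}
and observe that their compositions are naturally isomorphic to the identity functors: using the natural isomorphisms $G\ap_l\circ F\ap_l\cong \Id_{\LDZA}$, $F\ap_l\circ G\ap_l\cong \Id_{\LMBool}$ (from Theorem~\ref{genstonecnew11}) and $G_l\circ F_l\cong \Id_{\ZHLC}$, $F_l\circ G_l\cong \Id_{\LDZA}$ (from Theorem~\ref{genstonecnew1}), we compute $\PSI\circ\PHI=G_l\circ(G\ap_l\circ F\ap_l)\circ F_l\cong G_l\circ F_l\cong \Id_{\ZHLC}$ and similarly $\PHI\circ\PSI\cong \Id_{\LMBool}$.

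As an alternative, essentially equivalent, route, one could obtain the result directly from Theorem~\ref{nzduality} by showing that its functors $\FFF:\ZH\lra\MBool$ and $\GGG:\MBool\lra\ZH$ restrict to $\ZHLC$ and $\LMBool$. In that approach, for $X\in|\ZHLC|$ one checks that $\FFF(X)=i_X$ is an lmz-map because $X_{i_X}=\hat{X}$, which (as in the proof of Theorem~\ref{genstonecnew1}) is an open subspace of $\SSS(\co(X))$ by \cite[Theorems 3.3.9 and 3.3.8]{E}; and for an lmz-map $\a:A\lra B$ one checks that $\GGG(\a)=X_\a$ is open in the Stone space $\SSS(A)$, hence is a Boolean space.

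Since both of the ingredient theorems have already been proved in detail earlier in the section, no new obstacle arises here; the only bookkeeping step is to verify that the natural isomorphisms exhibiting the two equivalences can be composed without leaving the subcategories $\ZHLC$ and $\LMBool$, which is automatic because $F_l$, $G_l$, $F\ap_l$, $G\ap_l$ are defined as restrictions of functors whose naturality squares make sense on the full categories $\ZH$, $\DZA$, $\MBool$. The main (very mild) subtlety is merely to write the composition in the correct order so that contravariance is preserved, which is handled above.
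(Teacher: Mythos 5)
Your proposal is correct and follows essentially the same route as the paper: the paper likewise obtains the duality immediately by composing Theorems \ref{genstonecnew11} and \ref{genstonecnew1} via $F\ap_l\circ F_l$ and $G_l\circ G\ap_l$, and then (as in your alternative remark) observes that these are naturally isomorphic to the restrictions $\FFF_l$, $\GGG_l$ of the functors of Theorem \ref{nzduality}, so the duality is realized by the same formulas.
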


\begin{proof}
Obviously, our assertion follows immediately from Theorems \ref{genstonecnew11} and \ref{genstonecnew1}.
In the rest of this proof we will describe explicitly the contravariant functors between the categories\/ $\ZHLC$ and\/ $\LMBool$ which realize the requested duality and, moreover, we will simplify them.

Set $\FFF_0^{\, l}\df F\ap_l\circ F_l$ and $\GGG_0^{\, l}\df G_l\circ G\ap_l$. Then $\FFF_0^{\, l}:\ZHLC\lra \LMBool$ and $\GGG_0^{\, l}:\LMBool\lra \ZHLC$. Clearly, these contravariant functors realize the duality between the categories\/ $\ZHLC$ and\/ $\LMBool$ and they are restrictions of the functors $\FFF_0\df F\ap\circ F:\ZH\lra \MBool$ and $\GGG_0\df G\circ G\ap:\MBool\lra \ZH$, respectively. As it is shown in \cite[page 20]{DD}, the contravariant functors $\FFF_0$ and $\FFF$, as well as $\GGG_0$ and $\GGG$, are naturally isomorphic (see the proof of Theorem \ref{nzduality} for $\FFF$ and $\GGG$). Let $\FFF_l:\ZHLC\lra \LMBool$ and $\GGG_l:\LMBool\lra \ZHLC$ be the restrictions of the contravariant functors $\FFF$ and $\GGG$, respectively.  Then, obviously,  the conravariant functors $\FFF_0^{\, l}$ and $\FFF_l$, as well as $\GGG_0^{\, l}$ and $\GGG_l$, are naturally isomorphic. Thus, the contravariant functors $\FFF_l$ and $\GGG_l$ realize the duality between the categories\/ $\ZHLC$ and\/ $\LMBool$. The formulas with which  the conravariant functors $\FFF_l$ and $\GGG_l$ are described coincide with those describing the conravariant functors $\FFF$ and $\GGG$; they are given in the proof of Theorem \ref{nzduality}.
\end{proof}


\begin{thebibliography}{99}

\parskip0pt

\bibitem{AHS}
 {\sc J.  Ad\'amek,  H. Herrlich and   G. E. Strecker,}
  \newblock  {\em Abstract and Concrete Categories},
 \newblock    Online edition, 2004, http://katmat.math.uni-bremen.de/acc.

\bibitem{BMO}
{\sc Bezhanishvili, G., Morandi, P. J. and Olberding, B.}
\newblock   An extension of De Vries duality to completely regular spaces and compactifications.
\newblock Topology and its Applications 257 (2019),  85--105.

\bibitem{D-a0903-2593}
 {\sc G. Dimov,}
\newblock   A  de Vries-type duality theorem for locally compact spaces -- II,
\newblock arXiv:0903.2593v4, 1--37.




\bibitem{D-PMD12}
 {\sc G. Dimov,}
\newblock   Some generalizations of the Stone Duality Theorem,
\newblock Publicationes Mathematicae Debrecen, 80 (2012), 255--293.

\bibitem{DD}
{\sc  G. Dimov  and E. Ivanova-Dimova,}
\newblock Two extensions of the Stone Duality to the category of zero-dimensional Hausdorff spaces,
\newblock   arXiv:1901.04537v4, 1--33.


\bibitem{Do}
 {\sc  H. Doctor,}
\newblock   The categories of Boolean lattices,
Boolean rings and Boolean spaces,
\newblock Canad. Math. Bulletin, 7 (1964), 245--252.




\bibitem{E}
{\sc  R. Engelking,}
\newblock {\em  General Topology,} second ed.,
\newblock Sigma Series in Pure Mathematics, vol.~6, Heldermann Verlag, Berlin, 1989.


\bibitem{H}
{\sc  P. Halmos,}
\newblock {\em Lectures on Boolean Algebras,}
\newblock Springer Verlag, New York-Heidelberg-Berlin, 1974.


\bibitem{J}
{\sc P. T. Johnstone,}
\newblock   {\em Stone Spaces}.
\newblock   Cambridge Univ. Press, Cambridge, 1982.

\bibitem{kop89}
{\sc  Koppelberg, S.}
\newblock  {\em Handbook
  on Boolean Algebras, vol. 1: General Theory of Boolean Algebras}.
\newblock  North Holland,
1989.

\bibitem{MacLane}
{\sc  S. Mac Lane,}
\newblock  {\em Categories for the
Working Mathematician - Second Edition}.
\newblock  Springer-Verlag New York Inc., 1998.


\bibitem{ST}
{\sc  M. H. Stone,}
\newblock The theory of representations for Boolean algebras,
\newblock    Trans. Amer. Math. Soc.,  40  (1936), 37--111.

\bibitem{Stone}
{\sc  M. H. Stone,}
\newblock Applications of the theory of Boolean rings to general
topology,
\newblock    Trans. Amer. Math. Soc., 41  (1937), 375--481.

\end{thebibliography}
\end{document}